\newtheorem{thm}{Theorem}[section]
\newtheorem{lemma}[thm]{Lemma}
\newtheorem{prop}[thm]{Proposition}
\newtheorem{cor}[thm]{Corollary}
\theoremstyle{definition}\newtheorem{defin}[thm]{Definition}
\newtheorem{rem}[thm]{Remark}
\newtheorem{exm}[thm]{Example}
\newtheorem{question}[thm]{Question}
\newcommand{\z}[1][X] {{\mathcal O_{#1}}}
\newcommand{\id}{\mathbbm 1}
\newcommand{\R}{\mathbb R}
\newcommand{\N}{\mathbb N}
\newcommand{\Z}{\mathbb Z}
\newcommand{\T}{\mathbb T}
\newcommand{\eps}{\varepsilon}
\newcommand{\sub}{\subset}
\newcommand{\eset}{\emptyset}
\newcommand{\wtil}{\widetilde}
\newcommand{\tcG}{\widetilde{\mathcal{G}}}
\newcommand{\cB}{{\mathcal{B}}}
\newcommand{\cG}{{\mathcal{G}}}
\newcommand{\cU}{{\mathcal{U}}}
\newcommand{\wcU}{{\widehat{\mathcal{U}}}}
\def\id{{1\hskip-2.5pt{\rm l}}}
\newcommand{\Stab}{{\hbox{\rm Stab}}}
\newcommand{\supp}{{\it supp\,}}
\DeclareMathOperator{\sgrad}{sgrad}
\begin{document}

\author{\textsc Maia Fraser\thanks{Department of Mathematics and Statistics, University of Ottawa},
Leonid Polterovich
\thanks{School of Mathematical Sciences, Tel Aviv University} and Daniel Rosen\footnotemark[2]}

\title{On Sandon-type metrics for contactomorphism groups}

\date{\today}

\maketitle

\abstract{For certain contact manifolds admitting a 1-periodic Reeb flow we construct a conjugation-invariant norm on the universal cover of the contactomorphism group. With respect to this norm the group admits a quasi-isometric monomorphism of the real line. The construction involves the partial order on contactomorphisms and symplectic intersections. This norm descends to a conjugation-invariant norm on the contactomorphism group. As a counterpoint, we
discuss conditions under which conjugation-invariant norms for contactomorphisms are necessarily bounded.

\renewcommand{\thefootnote}{\fnsymbol{footnote}} 
\footnotetext{\emph{2010 Mathematics Subject Classification.} 53Dxx}     
\renewcommand{\thefootnote}{\arabic{footnote}} 

\section{Introduction}\label{sec:intro}

A \emph{conjugation-invariant norm} on a group $G$ is a function $\nu \colon G \to [0, \infty)$ satisfying the following properties:
\begin{enumerate}
\item $\nu(\id) = 0$ and $\nu(g) > 0$ for all $g \neq \id$.
\item $\nu(gh) \leq \nu(g) + \nu(h)$ for all $g, h \in G$.
\item $\nu(g^{-1}) = \nu(g)$ for all $g \in G$.
\item $\nu(h^{-1}gh) = \nu (g)$ for all $g, h \in G$.
\end{enumerate}
A function $\nu$ satisfying only 1-3 is a \emph{norm} on $G$ , while if the non-degeneracy condition $\nu(g) > 0$ for $g \neq \id$ is dropped $\nu$ is said to be a \emph{pseudo-norm}.

Given any bi-invariant metric $d$ on $G$, distance to the identity defines
a conjugation-invariant norm, $\nu(f) : = d(f, \id),$ and vice versa, any conjugation-invariant norm $\nu$
defines a bi-invariant metric $d(f, g) := \nu(fg^ {-1})$.

Following the terminology of \cite{BIP}
we say a norm on $G$ is  \emph{bounded} when there exists $C < \infty$
such that $\nu(g) \leq C$ for all $g \in G$. A norm is called {\it stably unbounded} if for some
$g \in G$, $\nu(g^n) \geq c|n|$ for all $n \in \Z$ with some $c>0$. A norm $\nu$ is \emph{discrete} if there exists a constant $c> 0$ such that $c \leq \nu(g)$ for any $g \neq \id$, and a norm is \emph{trivial} if it is both discrete and bounded (i.e., equivalent to the trivial norm). In many cases we will consider, a general argument of \cite{BIP} implies that all conjugation-invariant norms are discrete, and hence boundedness is equivalent to triviality.

In this paper, we focus on conjugation-invariant norms on contactomorphism groups and in particular on their (un)boundedness. 
Such norms were discovered by S. Sandon in \cite{sandon-bi-invt}
and further studied in recent papers \cite{zap} by F. Zapolsky and \cite{colin-sandon} by V. Colin and S. Sandon. Their geometric properties
turn out to be sensitive to the contact topology of $(V,\xi)$. The above norms are:
\begin{itemize}
\item unbounded for $T^*\R^n \times S^1$, but not stably unbounded \cite{sandon-bi-invt};
\item stably unbounded for $T^*X \times S^1$ with compact $X$ \cite{zap, colin-sandon} and for $\R P^{2n+1}$; \cite{colin-sandon};
\item bounded for $S^{2n+1}$ \cite{colin-sandon},
\end{itemize}
where the manifolds in the list are equipped with the standard contact structures.
All these norms are studied by using Legendrian spectral invariants. Sandon's norm and its extension
by Zapolsky are actually defined through these invariants, while the Colin-Sandon norms have geometric and/or dynamical definitions. It is also possible to define not conjugation-invariant (pseudo-) norms purely in terms of Hamiltonians; one of these, in the spirit of Hofer's norm, has been studied by Shelukhin \cite{shelukhin} (cf. \cite{Ryb12, Ryb13}). 

Conjugation-invariant norms are closely related to
 quasi-morphisms. Indeed if $\mu$ is a homogeneous quasi-morphism on a group $G$ --
 a real-valued function on $G$ such that $\phi(f^n) = n\phi(f)$ for any $f \in G$, $n \in \Z$ and for which there is $D>0$ such that $|\phi(fg) - \phi(f)- \phi(g)| \leq D$ for all $f,g \in G$ -- then it is easily checked that $\phi$ is conjugation-invariant and $\mu(f):=|\phi(f)| + D$ for $f \neq \id$,
 $\mu(\id) := 0$ defines a conjugation-invariant norm on $G$. In particular the quasi-morphisms which S. Borman and F. Zapolsky
 \cite{BZ} construct on prequantizations give rise to stably unbounded conjugation-invariant
 norms.

Finally, we note that, as with diffeomorphisms,
a fragmentation
property holds for contactomorphisms (see Banyaga \cite{banyaga})
and any open cover therefore induces a corresponding
fragmentation norm
(c.f. \cite{BIP} for diffeomorphisms,
\cite{colin-sandon} for contactomorphisms). With some care in the choice of cover
this norm will be conjugation-invariant and, as we show in Section~\ref{subsect-upper}, will dominate all known
conjugation-invariant norms, a situation analogous to that for
diffeomorphism groups \cite{BIP}.

\medskip

The aim of the present paper is twofold. In Section~\ref{sec:constr}
for certain contact manifolds admitting a 1-periodic Reeb flow we give yet another construction of a stably unbounded conjugation-invariant norm on the (universal cover of) contactomorphism groups. The construction involves the partial order on contactomorphism groups introduced in \cite{EP}, and the stable unboundedness
of the norm is deduced from basic results on symplectic intersections. The examples include various prequantization spaces such as $T^*X \times S^1$ with closed $X$, prequantizations of symplectically aspherical manifolds containing a closed Bohr-Sommerfeld Lagrangian submanifold and the standard projective spaces $\R P^{2n+1}$.

These results are contrasted with the following statement proved in Section~\ref{sec:obstr}:
if the contact fragmentation norm is bounded, which in particular holds for
$V = S^{2n+1}$, then
every known conjugation invariant norm on the identity component of the contactomorphism group is trivial. The proof follows closely \cite{BIP}.

\section{Constructions}\label{sec:constr}

\subsection{Preliminaries}

Let $(V,\xi)$ be a contact manifold, not necessarily closed, with co-oriented contact structure $\xi$.
Let us fix some notation for the groups we will be dealing with.
We write $\cG(V, \xi)$ for the identity component of
the group of compactly supported contactomorphisms of $(V, \xi)$. This is shortened
to $\cG(V)$ or $\cG$ when clear from the context.
We denote by $\tcG$ the universal cover of $\cG$.

Contact isotopies supported in a given open set $X \subset V$ give rise to subgroups of $\cG(V)$ and $\tcG(V)$ generated by these isotopies. We denote them respectively by $\cG(X) \subset \cG(V)$
and $\tcG(X,V) \subset \tcG(V)$. Let us mention that in general $\tcG(X,V)$ does not coincide
with the universal cover $\tcG(X)$ of $\cG(X)$. However there exists a natural
epimorphism
\begin{equation}\label{eq-chi}
\tcG(X) \to \tcG(X,V)\;.
\end{equation}

Throughout this section we assume that $\lambda$ is a contact form which obeys the co-orientation and whose Reeb vector field
generates a circle action $e_t$, $t \in S^1$. As an auxiliary group - not the primary object of our study - we
define
$\cG_e(V, \lambda)$ to be the group of contactomorphisms
of the form $e_t \cdot \phi$ where
$\phi \in \cG(V)$. We emphasize that by {\it contactomorphism},
we always mean a diffeomorphism preserving the contact structure, and in this last case, the co-orientation.
We do not, however, require contactomorphisms to preserve any specific contact form.
We denote by $\tcG_e$ the universal cover of $\cG_e$.

Observe that when $V$ is a closed manifold, $\cG(V)=\cG_e(V)$.  When $V$ is an open manifold,
every $f \in \cG_e(V)$ coincides with some $e_t$ outside a sufficiently large compact subset, so we
have a fibration $\cG(V) \to \cG_e(V) \to S^1$. The exact homotopy sequence yields in this case
that
$$0=\pi_2(S_1) \to \pi_1(\cG) \to \pi_1(\cG_e)\;,$$
and hence $\pi_1(\cG) \to \pi_1(\cG_e)$ is a monomorphism. This implies that $\tcG$ can be considered
as a subgroup of $\tcG_e$.

Let $SV= (V \times \R_+, d(s\lambda))$ be the symplectization of $V$. For a contactomorphism $f$ of $V$
we write $\bar{f}$ for the corresponding $\R_+$-equivariant symplectomorphism of $SV$.
A time-dependent function $F_t:SV \to \R$ which is $\R_+$-equivariant, i.e. such that $F_t(sx) =  sF_t(x)$ for all $s \in \R_+, x \in SV$, is
called a {\it contact Hamiltonian}. The Hamiltonian flow
it defines is also $\R_+$-equivariant and so produces a contact isotopy of $(V, \lambda)$. Moreover, every
contact isotopy $f_t$ is given uniquely by such an $F_t$. In particular, the Reeb flow $e_t$ is given by the
(time-independent) contact Hamiltonian $s$.

We consider
the stabilization $SV \times T^*S^1$ of $SV$ with the symplectic form $d[s\lambda + rdt])$, and for
a subset $X \subset SV$
denote $\Stab (X) = X \times \z[S^1] \subset SV \times T^*S^1$,
where in general in this paper we write $\z[M]$ for the zero section of $T^*M$. Under the
diffeomorphism
$\sigma: S(V \times T^* S^1) \rightarrow SV \times T^* S^1$,
$(s, u, r, t) \mapsto (s, u, sr, t)$ for $s \in \R_+, u \in V, r \in \R, t \in S^1$,
the Liouville form $s\lambda + rdt$
on the exact symplectic manifold $SV \times T^\ast S^1$ pulls back to the form
$s(\lambda + rdt)$ on $SV \times T^*S^1$, therefore $(V \times T^* S^1, \lambda + rdt)$ is a contact
manifold whose symplectization is symplectomorphic $SV \times T^*S^1$ and we have an induced
$\R_+$-action on
$SV \times T^*S^1$ given by
$c \cdot (z, r, t) = (c \cdot z, cr, t)$ where $z \in SV$, $r \in \R$ and $t \in S^1$.

\medskip
\noindent\begin{defin}\label{defin-stab-int}{\rm
We say that a compact set $B \subset SV$ has {\it stable intersection property} if $\Stab(B)$ cannot be
displaced from $SV \times \z[S^1]$ by an $\R_+$-equivariant Hamiltonian diffeomorphism of $SV \times T^*S^1$.}
\end{defin}

\medskip

Any $f \in \tcG_e$ is a homotopy class
of path connecting the identity to a fixed contactomorphism. We write $\{ f_t\}_{t \in [a,b]}$ for a specified path in the class $f$ and simply $\{ f_t\}$ when $[a, b] = [0, 1]$. We remind that the contact Hamiltonian
depends on this choice of path within the class $f$ and is not uniquely defined by $f$. We write $H(f_t)$ to denote the Hamiltonian for the path $\{ f_t \}$.
Recall that the cocycle formula yields
\begin{equation}\label{eq-cocycle}
H(f_tg_t)=H(f_t)+ H(g_t)\circ (\bar{f}_t)^{-1}\;.
\end{equation}

\medskip
The following binary relation introduced and studied in \cite{EP} plays a crucial role in our story:
we write $f \succeq \id$, $f \in \tcG_e$
if $f$ ``can be given by a non-negative contact Hamiltonian'', i.e. there is some path $\{f_t\}$ in the class of $f$
having a non-negative contact Hamiltonian.
Observe that in this case
${d \over dt} f_t(x) \in T_{f_tx}V$ belongs to the non-negative half-space bounded by
the contact hyperplane $\lambda_{f_tx}$.
We remark that having a non-negative Hamiltonian is a coordinate-free condition and so $f \succeq \id$ is invariant under conjugation of $f$ in $\tcG_e$.
We write $f \succeq g$ if $fg^{-1} \succeq \id$
(here $fg^{-1}$ corresponds to the class of paths $\{f_t g_t^{-1}\}$). By conjugating with $g^{-1}$, we have
the equivalent definition: $f \succeq g$ if $g^{-1}f \succeq \id$. Observe that the relation $\succeq$
is reflexive.

We denote by $e$ the lift of $\{e_t\}_{t \in [0,1]}$ to $\tcG_e$, and by $e^c$, $c \in \R$
the element of $\tcG_e$ represented by the path $\{e_t\}_{t \in [0,c]}$.

\begin{lemma}\label{lem-succeq}
\begin{enumerate}
\item \label{item-FG} For $f,g \in \tcG_e$,  $f \succeq g$ if and only if $f$ and $g$ can be given by Hamiltonians
$F$ and $G$ such that $F \geq G$. Moreover one can prescribe either $F$ or $G$
in advance, hence $\succeq$ is transitive;
\item $f \succeq g$ and $a \succeq b$ implies that $fa \succeq gb$, i.e. $\succeq$ is bi-invariant;
\item  Any element $\phi \in \tcG_e$ generated by a positive contact Hamiltonian bounded away from zero on the hypersurface $\{s=1\}$ is \emph{dominant}, i.e. $\forall f \in \tcG_e, \exists p \in \N$ s.t.
$\phi^p \succeq f$. In particular any $e^c, \, c > 0$ is dominant.
\end{enumerate}
\end{lemma}

\begin{proof}
To prove the {\it if}~ direction of the first property, fix paths $\{f_t\}$ and $\{g_t\}$ for $f$ and $g$ such that $H(f_t) \geq H(g_t)$. By \eqref{eq-cocycle}
\begin{align*}
H(g_t^{-1}f_t)(z, t)
&= H(g_t^{-1})(z, t) + H(f_t)(\bar g_tz, t) \\
&= - H(g_t)(\bar g_tz, t) + H(f_t)(\bar g_tz, t).
\end{align*}
So $\{g_t^{-1}f_t\}$ is a path in the class $g^{-1}f$ having non-negative Hamiltonian.
Now, to prove the {\it only if} direction, let $\{h_t\}$ be a path in the class $g^{-1}f$
having non-negative Hamiltonian $H(h_t)$. Let $\{g_t\}$ be an arbitrary path for $g$ and set $f_t = g_th_t$
(or if we wish to prescribe $f_t$ then set $g_t$ accordingly).
Then $h_t = g_t^{-1}f_t$ and so the earlier computation shows that $H(f_t) \geq H(g_t)$.

The second property is also proved using the cocycle formula \eqref{eq-cocycle} (comparing
Hamiltonians for $f (ab^{-1})$ and $g$). The third property is straightforward
(note that $cs$ is a contact Hamiltonian for $e^c$).
\end{proof}

\begin{rem}\label{rem:periodic}
The Hamiltonians $F$ and $G$ in Lemma~\ref{lem-succeq} item \ref{item-FG}
can moreover be taken to be 1-periodic.
This is because a non-negative (resp. positive) isotopy can be homotoped
within the class of non-negative (resp. positive) isotopies to one with
1-periodic Hamiltonian. For positive isotopies this is
given by Lemma \cite[3.1A]{EP}, see also the argument within the proof
of Theorem \cite[1.19]{EKP06}.
The same arguments go through for non-negative isotopies.
\end{rem}

The next result is a direct analogue of Theorem \cite[2.3A]{EP}.
Reflexivity, transitivity and bi-invariance of $\succeq$ are given by the previous Lemma.
As in  \cite[2.3A]{EP}, the only issue is anti-symmetry.

\medskip
\noindent \begin{thm} \label{thm-1} Suppose that $SV$ contains a compact set $B$
with stable intersection property.
Then $\succeq$ is a bi-invariant partial order on $\tcG_e$.
\end{thm}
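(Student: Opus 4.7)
Since Lemma~\ref{lem-succeq} already gives that $\succeq$ is reflexive, transitive and bi-invariant, only antisymmetry remains, and this is where the stable intersection hypothesis enters. I would follow the scheme of \cite{EP}, Theorem 2.3.A, adapted to the present contact-equivariant setting.

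\emph{Reduction to contractible non-negative loops.} Suppose $f \succeq \id$ and $\id \succeq f$ in $\tcG_e$. Pick a path $\{f_t\}$ representing $f$ with $H(f_t)\ge 0$ and a path $\{g_t\}$ representing $f^{-1}$ with $H(g_t)\ge 0$. The concatenation $\{f_t\}\ast\{f_1 g_t\}$ is a path in $\cG_e$ from $\id$ to $\id$; by \eqref{eq-cocycle}, and since $H(f_1)\equiv 0$, its contact Hamiltonian is pointwise non-negative. Its class in $\tcG_e$ equals $f\cdot f^{-1}=\id$, so it is contractible as a loop. Antisymmetry therefore reduces to the claim that every contractible loop $\{\psi_t\}$ in $\cG_e$ based at $\id$ with $H(\psi_t)\ge 0$ pointwise is the constant loop; once this is known, the reduced loop above is constant and $f=\id$ in $\tcG_e$.

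\emph{Contradiction via stable intersection.} Assume for contradiction such a loop $\{\psi_t\}$ exists with $H_{t_0}(x_0)>0$ at some point $(t_0,x_0)$, and fix a pair $(A,B)$ with the stable intersection property. The lift to $SV$ gives a contractible loop $\{\overline{\psi}_t\}$ of $\R_+$-equivariant Hamiltonian symplectomorphisms with generating Hamiltonian $F_t(x,s)=s\,H_t(x)\ge 0$. The main step — an adaptation of the displacement argument in \cite{EP}, Theorem 2.3.A — is to manufacture from $\{\overline{\psi}_t\}$ an $\R_+$-equivariant Hamiltonian diffeomorphism of $\Stab(SV)=SV\times T^*S^1$ that displaces $\Stab(A)$ from $\Stab(B)$, contradicting the stable intersection hypothesis. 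The mechanism is to couple many iterates of $\{\overline{\psi}_t\}$ on the $SV$-factor to a translation along the $T^*S^1$-factor, realized by an $\R_+$-equivariant Hamiltonian of the form $s\,H_t(x)+c\,r$; the contractibility of $\{\psi_t\}$ lets the iterated isotopy be homotoped within $\R_+$-equivariant Hamiltonian isotopies, while the positive bump $H_{t_0}(x_0)>0$ guarantees a non-trivial cumulative shift over many iterations, eventually displacing the compact set $\Stab(B)$ from $\Stab(A)$.

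The main obstacle I anticipate is assembling this displacement rigorously while preserving $\R_+$-equivariance. The natural translation generator $-r$ on $T^*S^1$ is $\R_+$-equivariant under the scaling $(s,r)\mapsto(cs,cr)$ and combines cleanly with $s\,H_t(x)$, but a pure $q$-shift of $\Stab(B)=B\times S^1$ is trivial; one must exploit the null-homotopy of $\{\psi_t\}$ to convert the accumulated $q$-twist into a shift in the $r$-direction that actually pushes $\Stab(B)$ off the zero section $\{r=0\}$. This is precisely the role of the stabilization by $T^*S^1$: it supplies the extra degrees of freedom in which stable intersection can be violated, once a non-negative, contractible, non-constant loop becomes available.
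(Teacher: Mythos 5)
Your first step — reducing antisymmetry to the assertion that every contractible loop with pointwise non-negative contact Hamiltonian must be the constant loop — is correct and is exactly the reduction the paper describes ("anti-symmetry of $\succeq$ can only fail when there exists a non-negative, non-constant contractible loop"). The concatenation $\{f_t\}\ast\{f_1 g_t\}$ does represent $ff^{-1}=\id$ in $\tcG_e$, hence is contractible, and the cocycle computation showing its Hamiltonian is $\geq 0$ is fine.

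The gap is in your second step, and it is not merely technical. First, a local error: on $\Stab(SV)=SV\times T^*S^1$ with symplectic form $d(s\lambda)+dr\wedge dt$, the Hamiltonian $cr$ generates rotation in the $t$-coordinate, not translation in $r$; so "an $\R_+$-equivariant Hamiltonian of the form $sH_t(x)+cr$ realizing a translation along the $T^*S^1$-factor" is not what that Hamiltonian does. The construction you are reaching for is the \emph{suspension} $\Sigma_\varphi(z,r,t)=(\overline{\phi}_t z,\, r-\Phi_t(\overline{\phi}_t z),\, t)$, which the paper introduces just after this theorem; it is Hamiltonian when $\varphi$ is contractible, and it shifts $r$ downward by $\Phi_t$. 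But — and this is the substantive gap — if $\Phi$ is merely $\geq 0$ with zeros (which is all the reduction gives you), $\Sigma_\varphi$ does \emph{not} displace $\Stab(A)$ from $\Stab(B)$: at a zero of $\Phi_t$ the $r$-coordinate is unchanged, and Lemma~\ref{lem-loop} in the paper confirms exactly this (there is always some $(t_0,y)$, $y\in B$, with $\Phi_{t_0}(y)=0$). Iterating the suspension does not help, because points lying in the zero set of $\Phi_t$ along their $\overline{\psi}_t$-orbit never move in $r$, so "cumulative shift over many iterations" fails precisely where it would be needed. What is missing is the additional argument (this is the content of \cite{EP}, Theorem 2.3.A, which the paper cites rather than reproves) that upgrades a non-negative, non-constant contractible loop to one whose suspension genuinely displaces — e.g.\ by showing such a loop forces the existence of a \emph{positive} contractible loop. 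Your proposal identifies the right objects and the right target (an $\R_+$-equivariant displacement of $\Stab(A)$ from $\Stab(B)$), but does not supply, or correctly sketch, the mechanism that bridges from "$\Phi\ge 0$, $\Phi\not\equiv 0$" to an actual displacement.
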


\begin{rem}\label{rem:anti-sym}
	Anti-symmetry of $\succeq$ on the universal cover of a group of contactomorphisms
	fails if and only if that group contains a
	non-constant contractible loop with non-negative Hamiltonian.
	For closed $V$, $V$
	is said to be \emph{orderable}\footnote{In fact, although the study of $\succeq$ on $\tcG(V)$ was initiated in \cite{EP}, the terminology
	orderable was introduced later in \cite{EKP06}.} \cite{EP} when $\succeq$
	is anti-symmetric on $\tcG(V)$. For open $V$
	with 1-periodic Reeb flow we consider a stronger notion,
	referring to $V$ as \emph{orderable}
	when $\succeq$ is anti-symmetric on $\tcG_e(V)$.
\end{rem}

The proof of Proposition \cite[2.3A]{EP} derives orderability
from the stable intersection property using two results: Proposition \cite[2.1B]{EP} and Proposition \cite[2.3B]{EP}.
The first applies
to $\cG(V)$ for closed $V$ and says that existence of a non-negative, non-constant contractible loop implies
existence of
a positive one. The second shows the
stable intersection property prevents existence of a positive
contractible loop.

The first proposition \cite[2.1B]{EP} requires compactness of $V$, but can be modified to deal with
$\cG_e(V)$ as shown in Proposition~\ref{prop:posContr}.
The second proposition \cite[2.3B]{EP} generalizes immediately
to our setting and is stated and proved as Lemma \ref{lem-loop} below.
These two results together imply Theorem \ref{thm-1} (given compact $B \subset SV$
we let $C$ be its projection to $V$).

\begin{prop} (c.f. Proposition {\cite[2.1B]{EP}})  \label{prop:posContr}
Assume there exists a non-negative non-constant loop
$\{f_t\}_{t \in S^1}$ of contactomorphisms.
Then for any compact set $C \subset V$ there exists a loop $\{g_t\}_{t \in S^1}$ of contactomorphisms
whose contact Hamiltonian $G_t$ is positive on $SC$ for all $t \in S^1$. Moreover,
when $\{f_t\}_{t \in S^1}$ is contractible so is $\{g_t\}_{t \in S^1}$. Compact support is also retained.
\end{prop}

\begin{proof}
At first we proceed as in the first two steps of the proof of Proposition {\cite[2.1B]{EP}}, and then we
resort to a modification.

As in step 1 of {\cite[2.1B]{EP}}, without loss of generality
we may assume $F(z, 0) \neq 0$ for some $z \in SV$.
Let  $U \subset V$ be such that
$F(z, 0) > 0$
for all $z \in SU$.

As in step 2 of {\cite[2.1B]{EP}}, we take a sequence $\varphi_1, \ldots, \varphi_d$ of
elements of $\cG_e$ such
that $C \subset \bigcup\limits_{k = 0}^d \psi_k (U),$
where
$\psi_0 = \id$, $\psi_k = \varphi_1 \cdots \varphi_k$ for $k = 1, \ldots, d$.
This can be done since $C$ is compact.
Define
\begin{equation}
g_t = f_t\varphi_1 f_t \ldots \varphi_d f_t (\varphi_1 \cdots \varphi_d)^{-1}. \label{eq:spread}
\end{equation}
This forms a loop $\{g_t\}_{t \in S^1}$
generated by the Hamiltonian
\begin{align*}
G(z, t) = F(z, t) + &F(\bar\varphi_1^{-1} \bar f_t^{-1} z, t) + \\
\ldots + &F(\bar \varphi_d^{-1} \bar f_t^{-1} \cdots \bar\varphi_1^{-1} \bar f_t^{-1} z, t).
\end{align*}
For $z \in SU$ the first summand is positive when $t = 0$. On the other hand,
for $z \in SC \setminus SU$ there exists $k$ such that $\bar \psi_k^{-1}z \in SU$, in
which case the $k$'th summand is positive when $t = 0$.
Since all summands are
non-negative we conclude
$G(z, 0) > 0$ for all $z \in SC$. Note that $\{g_t\}_{t \in S^1}$ is contractible if $\{f_t\}_{t \in S^1}$ is,
and compact support is also retained, so without loss of generality we now
assume $F(z, t) > 0$ for all $z \in SC$, $t = 0$ and hence
for all $z \in SC$ and $t \in \Delta \subset S^1$ a closed interval containing $0$.

Let $H$ be an autonomous contact Hamiltonian $H \colon SV \to \R$
which is positive on the set $\cup_{t\in S^1} \bar f_t (SC)$. Moreover assume
$H$ vanishes outside $SK$ for some compact $K \subset V$ so the associated contact isotopy $h_t$
is compactly supported.
By construction, $H(\bar f_t^{-1}z)>0$ for all $t \in S^1$, $z \in SC$.
We claim there exists a smooth function $u \colon [0,1] \to \R$ with the following properties:
\begin{itemize}
\item $u(0)=u(1)=0$.
\item For all $t \notin \Delta$, $u'(t)>0$.
\item For all $t \in \Delta$,
\begin{equation}\label{eq-u'}
u'(t)>-\min_{z\in SC}\frac{F(z,t)}{H(\bar f_t^{-1}z)}.
\end{equation}
\end{itemize}
Indeed, the minimum \eqref{eq-u'} exists and is positive, since for each $t \in \Delta$, ${F(z,t)}/{H(\bar f_t^{-1}z)}$ is a well defined, positive, and $\R_+$-invariant function on $SC$; therefore, we can allow $u'$ to be negative on part of $\Delta$
and so make it positive outside of $\Delta$.

Consider now the loop $g_t=f_t h_{u(t)}$. Clearly, it is homotopic to $\{f_t\}$ via the endpoint-preserving homotopy $\{f_th_{su(t)}\}$. It is also compactly supported when $f_t$ is (because $\{h_t\}$ has compact support).

Moreover, its Hamiltonian is $G(z,t)=F(z,t)+u'(t)H(\bar f_t^{-1}z)$ which, by condition \eqref{eq-u'},
is positive for all $z \in SC$ and $t \in \Delta$.
We claim that $G(z,t)>0$ for all $z \in SC$ and $t \in S^1$. Indeed,
if $t \notin \Delta$, then $u'(t)>0$ and, as observed above, $H(\bar f_t^{-1}z)>0$
for $z \in SC$, thus implying $G(z,t)>0$ (since $F \geq 0$).
\end{proof}

\subsection{The norm $\nu$}
\noindent {\bf Throughout this section we assume that $V$ is {orderable}; that is, the relation
$\succeq$ is a partial order on $\tcG_e(V)$ (see Remark~\ref{rem:anti-sym}).}

\medskip
\noindent

\medskip\noindent
Observe that $e$ lies in the center of $\tcG_e$.
For an element $f \in \tcG_e$ consider the following invariants:
$$\nu_+(f) := \min\{n\in \Z\;:\: e^n \succeq f\}$$
and
$$\nu_-(f) := \max\{n\in \Z\;:\: e^n \preceq f\}\;.$$
Note that $\nu_- \leq \nu_+$ by transitivity of $\succeq$.

\medskip\noindent
It is readily checked that $\nu_+$ and $\nu_-$ are conjugation-invariant (since $e$ is in the center) and
$\nu_-(f) = -\nu_+(f^{-1})$, using the bi-invariance of $\succeq$.
We also observe that $\nu_+$ and $\nu_-$ are respectively sub- and super-additive:
\begin{align*}
\nu_+(fg) &\leq \nu_+(f) + \nu_+(g) \\
\nu_-(fg) &\geq \nu_-(f) + \nu_-(g).
\end{align*}
We write
$$\nu(f) := \max (|\nu_+ (f)|, |\nu_- (f)|)\;.$$

\begin{rem} \label{rem-amplitude} Let $k \in \Z_{\geq 0}$. We remark that
	$\nu(f) \leq k$ if and only if $e^{-k} \preceq f \preceq e^k$. This
	in turn is equivalent to the property:
	$f$ can be generated by a contact Hamilonian $F_+$
	such that $F_+ \leq ks$ and also by a contact Hamiltonian $F_-$ such that
	$F_- \geq -ks$ (see Lemma~\ref{lem-succeq}).
	By Remark~\ref{rem:periodic}, the Hamitonians $F_\pm$ may moreover
	be assumed to be 1-periodic.
\end{rem}

We have the following.

\medskip
\noindent \begin{thm} \label{thm-2} $\nu$ is a conjugation-invariant norm on $\tcG_e$.
\end{thm}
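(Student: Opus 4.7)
The plan is to verify the four defining properties of a conjugation-invariant norm for $\nu$ in turn. The only substantive input is the orderability of $V$, which enters essentially in the non-degeneracy property and, implicitly, in finiteness of $\nu_\pm$ and the inequality $\nu_-\le\nu_+$ already recorded in the text.

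First I would observe that $\nu_+(f)$ and $\nu_-(f)$ are well-defined integers. Dominance of $e$, part (3) of Lemma~\ref{lem-succeq}, gives some $p\in\N$ with $e^p\succeq f$, so the set defining $\nu_+(f)$ is nonempty; applied to $f^{-1}$ it likewise makes $\nu_-(f)$ finite. The inequality $\nu_-(f)\le\nu_+(f)$ then follows from $e^{n-m}\preceq\id$ (by transitivity from $e^n\preceq f\preceq e^m$), combined with the observation that when $n>m$ the element $e^{n-m}$ also satisfies $e^{n-m}\succeq\id$ via the positive contact Hamiltonian $(n-m)s$; antisymmetry then forces $e^{n-m}=\id$, contradicting the infinite order of $e$ in $\tcG_e$ that orderability supplies. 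In particular, reflexivity yields $\nu_+(\id)\le 0\le\nu_-(\id)$, and sandwiching via $\nu_-\le\nu_+$ gives $\nu_+(\id)=\nu_-(\id)=0$, so $\nu(\id)=0$.

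Next I would handle the algebraic properties. The triangle inequality is immediate: if $e^m\succeq f$ and $e^n\succeq g$, then bi-invariance of $\succeq$ gives $e^{m+n}=e^m e^n\succeq fg$, so $\nu_+(fg)\le\nu_+(f)+\nu_+(g)$, with the analogous super-additivity for $\nu_-$; passing to maxima of absolute values yields $\nu(fg)\le\nu(f)+\nu(g)$. For $\nu(f^{-1})=\nu(f)$, I would note the identity $\nu_-(f)=-\nu_+(f^{-1})$, which follows from $e^n\preceq f\Leftrightarrow e^{-n}\succeq f^{-1}$; hence $|\nu_\pm(f^{-1})|=|\nu_\mp(f)|$ and the maximum is preserved. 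Conjugation-invariance of $\nu_+$ (and analogously $\nu_-$) is a direct consequence of $e$ being central in $\tcG_e$: applying bi-invariance of $\succeq$ to $e^n\succeq f$ yields $e^n=h^{-1}e^n h\succeq h^{-1}fh$, so $\nu_+(h^{-1}fh)=\nu_+(f)$.

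Finally, non-degeneracy is the only step where antisymmetry of $\succeq$ is used in an essential way, and it is the main conceptual point of the theorem. If $\nu(f)=0$, then $\nu_+(f)=\nu_-(f)=0$, which unpacks to $\id\succeq f$ and $f\succeq\id$; antisymmetry of the partial order forces $f=\id$. The principal obstacle is thus not any single computation but the careful bookkeeping around finiteness of $\nu_\pm$ and the inequality $\nu_-\le\nu_+$; both rest on the orderability of $V$, and implicitly on the infinite order of $e$ in $\tcG_e$ that orderability (together with the positivity of the Reeb Hamiltonian) guarantees.
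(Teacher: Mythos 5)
Your proposal follows essentially the same route as the paper: sub-additivity of $\nu_+$, super-additivity of $\nu_-$, conjugation-invariance via centrality of $e$, symmetry via $\nu_-(f)=-\nu_+(f^{-1})$, and non-degeneracy via antisymmetry of $\succeq$. The additional remarks on finiteness of $\nu_\pm$ via dominance and on $\nu_-\le\nu_+$ are welcome bookkeeping that the paper disposes of before stating the theorem.

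The one place you are too quick is the triangle inequality. Sub-additivity of $\nu_+$ and super-additivity of $\nu_-$ do not ``immediately'' give $\max(|\nu_+(fg)|,|\nu_-(fg)|)\le \nu(f)+\nu(g)$ by ``passing to maxima of absolute values'': super-additivity bounds $\nu_-(fg)$ from \emph{below}, so $|\nu_-(fg)|$ is controlled only when $\nu_-(fg)\le 0$, and similarly sub-additivity controls $|\nu_+(fg)|$ only when $\nu_+(fg)\ge 0$. What rescues the argument is the sign/ordering constraint $\nu_-\le\nu_+$: if $\nu(fg)=|\nu_+(fg)|$ and $\nu_+(fg)<0$, then $\nu_-(fg)\le\nu_+(fg)<0$ forces $|\nu_-(fg)|\ge|\nu_+(fg)|$, so one may as well use $\nu_-$; symmetrically if $\nu(fg)=|\nu_-(fg)|$ with $\nu_-(fg)>0$. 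This is precisely the two-case analysis the paper carries out. Your statement of the conclusion is correct, but you should spell out this short case-split rather than assert it as immediate.
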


\begin{proof}
Clearly $\nu \geq 0$.
Observe moreover that $\nu(f)=0$ if and only if $\id \succeq f \succeq \id$ and hence $f= \id$.
Since both $\nu_+$ and $\nu_-$ are conjugation-invariant, $\nu$ is as well.
It remains to prove the triangle inequality. Observe that, by Remark \ref{rem-amplitude},
$$
\nu(f) = \min \{k \in \Z_{\geq 0} \,:\, e^{-k} \preceq f \preceq e^k \}.
$$
Now, let $f,g \in \tcG_e$, and put $m = \nu(f)$, $n=\nu(g)$. Then
$$
e^{-m} \preceq f \preceq e^m; \quad e^{-n} \preceq g \preceq e^n.
$$
By bi-invariance of $\succeq$, this implies $e^{-(m+n)} \preceq fg \preceq e^{m+n}$, which gives $\nu(fg) \leq m+n=\nu(g)+\nu(g)$.
\end{proof}

\begin{rem} Note
the above argument in fact proves that
$\nu$ is a conjugation-invariant semi-norm which
is non-degenerate if and only if $\succeq$ is anti-symmetric.
By trivial reasons, non-degeneracy of the semi-norm $\nu$ is therefore
equivalent to orderability of $\tcG_e(V)$ (see Remark~\ref{rem:anti-sym}).
For closed $V$, the oscillation semi-norm $\nu_{osc}$ of \cite{colin-sandon}
has an analogous property (c.f. Proposition \cite[3.2]{colin-sandon}):
by definition it is non-degenerate
if and only if there is no positive
contractible loop in $\cG(V)$, and
this in turn is equivalent
to orderability by Proposition \cite[2.1B]{EP}.
\end{rem}

\begin{rem} Note that $a \succeq b \succeq \id$ implies $\nu(a) \geq \nu(b)$.
In other words, $(\tcG(V), \nu, \succeq)$ is a partially ordered metric space in the sense
of \cite{EP}, Section 1.7.
\end{rem}

\medskip
\noindent \begin{thm}\label{thm-3} Suppose that a compact set $B \subset SV$ has stable intersection property
and in addition $B$ is invariant under the flow  $\bar{e}_t$. Assume
$f \in \tcG_e$ can be generated by a contact Hamiltonian $F_t$ satisfying
$F_t > cs$ on $B$ for some $c >0$ and all $t \in S^1$.
Then $\nu_+(f) \geq [c]$, the integer part of $c$,
and so $\nu(f) \geq [c]$.
\end{thm}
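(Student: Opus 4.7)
The plan is to argue by contradiction: assume $\nu_+(f) < [c]$, so that $e^n \succeq f$ for some integer $n \leq [c]-1 < c$, and convert this into the existence of a contractible loop in $\cG_e$ whose contact Hamiltonian is strictly positive on $B$. Invoking the symplectization/suspension mechanism already used in the proof of Theorem~\ref{thm-1} (the same implication as in Theorem~2.3.A of \cite{EP}), such a loop produces an $\R_+$-equivariant Hamiltonian diffeomorphism of $\Stab(SV)$ displacing $\Stab(A)$ from $\Stab(B)$, contradicting the stable intersection property of $(A,B)$.

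To produce the loop I would first apply Lemma~\ref{lem-succeq}(1), keeping the given path $\{f_t\}$ (with Hamiltonian $F_t$) fixed, to obtain a path $\{\psi_t\}_{t\in[0,1]}$ representing $e^n$ whose contact Hamiltonian $K_t$ satisfies $K_t \geq F_t$, and in particular $K_t > cs$ on $B$. Since $n\in\Z$ one has $\psi_1 = e_n = \id$, so $\{\psi_t\}$ is already a loop at $\id$, but in general not a contractible one. I would therefore form
$$\Phi_t \,:=\, e_{-nt}\,\psi_t\,,$$
which is again a loop at $\id$; because $\{\psi_t\}$ and $\{e_{nt}\}_{t\in[0,1]}$ represent the same class $e^n \in \tcG_e$, they are homotopic with fixed endpoints, so $\{\Phi_t\}$ is homotopic to the constant loop at $\id$ and hence contractible in $\cG_e$.

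The contact Hamiltonian of $\Phi_t$ is then controlled by the cocycle formula~\eqref{eq-cocycle}:
$$H(\Phi_t) \;=\; -ns \,+\, K_t\circ\overline{e}_{nt}\,.$$
Since $B$ is $\overline{e}_t$-invariant and the Reeb flow preserves the $s$-coordinate on $SV$, the bound $K_t > cs$ on $B$ is transported to give $K_t\circ\overline{e}_{nt} > cs$ on $B$; hence $H(\Phi_t) > (c-n)\,s > 0$ on $B$, uniformly bounded below by compactness of $B$ and positivity of $s$. This is the desired positive contractible loop.

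The main obstacle is the final implication, namely turning this loop into an actual displacement of $\Stab(A)$ from $\Stab(B)$ by an $\R_+$-equivariant Hamiltonian diffeomorphism of $\Stab(SV)$. This is precisely the implication already encoded in Theorem~\ref{thm-1}: a null-homotopy of $\{\Phi_t\}$ is combined with the $r$-coordinate in $T^*S^1$ in a suspension construction on $SV \times T^*S^1$ to build the required displacement. No new conceptual difficulty arises beyond carefully tracking $\R_+$-equivariance through this construction, which parallels the proof of Theorem~2.3.A of \cite{EP} already cited.
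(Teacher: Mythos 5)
Your proof is correct and essentially reproduces the paper's argument: the paper likewise fixes the path $\{f_t\}$, invokes Lemma~\ref{lem-succeq}(1) to get a dominating path for $e^n$, rewrites it as $e_{nt}\phi_t$ with $\phi_t$ a contractible loop, uses the cocycle formula and the $\overline{e}_t$-invariance of $B$, and contradicts the stable intersection property via Lemma~\ref{lem-loop} (which is exactly the suspension mechanism you describe, packaged as a lemma stated just before the theorem rather than within the proof of Theorem~\ref{thm-1}). The only cosmetic difference is that you show the loop's Hamiltonian is strictly positive on $B$ and then invoke the displacement, whereas the paper bounds $F_t$ from above and applies Lemma~\ref{lem-loop} directly to get a vanishing point — these are the same contradiction written in opposite order.
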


\begin{cor}\label{cor-stb-unbd-tG} Suppose that a compact set $B \subset SV$ has stable intersection property
and in addition $B$ is invariant under the flow  $\bar{e}_t$.
Then $\nu$ is stably unbounded on $\tcG(V) \subset \tcG_e(V)$.
Moreover, $\tcG(V)$ admits a quasi-isometric monomorphism of the real line.
\end{cor}

\begin{proof} Suppose $c \in \N$ and $\epsilon > 0$. Let $C = \pi(B) \subset V$
be the (compact) projection of $B$. Define $F$ to be an autonomous contact Hamiltonian
equal to $(c + \epsilon) s$
on $SC$ and supported in a larger $SK$, with $K \subset V$ compact. Then $F$ generates an element $f \in \cG(V)$
which satisfies the hypotheses of Theorem~\ref{thm-3}.
In particular if $n\in \Z$ then
$nF$ is a Hamiltonian for $f^n$. For
$n > 0$ it strictly exceeds $cn$ on $B$ so the Theorem gives
$\nu_+(f^n) \geq cn$, while for $n <0$
we obtain $\nu_-(f^n) \geq c|n|$. We conclude $\nu(f^n) \geq c|n|$ for
all $n \in \Z$. Moreover, taking $\epsilon <1$,
we can arrange that $|F| \leq (c+1)s$ on $SV$, which by Remark~\ref{rem-amplitude} gives
$\nu(f^n) \leq (c+1)|n|$. Since $F$ is autonomous
its Hamiltonian flow $t \mapsto f_t$ thus
defines a quasi-isometric monomorphism $\R \to \tcG(V)$.
\end{proof}

\begin{rem}\label{rem-stb-unbd-tGe}
Observe that, by definition, $\nu(e^n) = n$ for any $n \in \N$, and hence $\nu$ is always stably unbounded on $\tcG_e$. The point of Corollary \ref{cor-stb-unbd-tG} is that existence of a compact invariant set with the stable intersection property implies stable unboundedness also on $\tcG$. On the other hand, when $V$ is compact, $\tcG_e(V) = \tcG(V)$ and hence $\nu$ is always stably unbounded on $\tcG(V)$.
\end{rem}

\medskip
\noindent Before proving the theorem, we recall the following construction (see \cite{EP}).
Let $\varphi= \{\phi_t\}_{t \in S^1}$, $\phi_0=\phi_1=\id$ be a loop of contactomorphisms in $\cG_e(V)$
generated by a contact Hamiltonian $\Phi_t$ on $SV$. Define {\it the suspension map}
$$\Sigma_\varphi : SV \times T^* S^1 \to SV \times T^* S^1$$
of $\varphi$ by
$$ (z,r,t) \mapsto (\bar{\phi}_tz, r-\Phi_t(\bar{\phi}_tz),t)\;.$$
The map $\Sigma_\varphi$ is an $\R_+$-equivariant symplectomorphism of $SV \times T^*S^1$.
Given two loops $\varphi$ and $\theta$, the co-cycle formula implies $\Sigma_{\varphi\circ\theta} = \Sigma_\varphi \circ \Sigma_\theta$. Furthermore, if
$\varphi$ is contractible and $\varphi^{(s)}$ is the homotopy of $\varphi=\varphi^{(0)}$ to the constant
loop $\varphi^{(1)}=\id$, the family of suspension maps $\Sigma_{\varphi^{(s)}}$ is a Hamiltonian isotopy
of $SV \times T^*S^1$.

\medskip
\noindent
\begin{lemma} (c.f. Proposition {\cite[2.3B]{EP}}) \label{lem-loop}
Let $B \subset SV$ be a compact set with stable intersection property.
Then for every contractible loop $\varphi=\{\phi_t\}_{t \in S^1}$ its contact Hamiltonian $\Phi$ vanishes for some
$t_0 \in S^1$ and $y \in B$: $\Phi_{t_0}(y)=0$.
\end{lemma}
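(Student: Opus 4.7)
The plan is to apply the stable intersection hypothesis on $(A,B)$ directly to the suspension map $\Sigma_\varphi$ recalled just before the lemma. Since $\varphi$ is contractible, the homotopy $\varphi^{(s)}$ with $\varphi^{(1)} = \id$ produces, via $s \mapsto \Sigma_{\varphi^{(s)}}$, an $\R_+$-equivariant Hamiltonian isotopy of $\Stab(SV)$ from $\Sigma_\varphi$ to $\Sigma_{\id} = \id$ (the constant loop has zero contact Hamiltonian, so the displayed formula for $\Sigma_{\varphi^{(1)}}$ reduces to the identity). In particular $\Sigma_\varphi$ itself is an $\R_+$-equivariant Hamiltonian diffeomorphism of $\Stab(SV)$, and therefore it cannot displace $\Stab(A)$ off $\Stab(B)$.

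Having secured $\Sigma_\varphi(\Stab(A)) \cap \Stab(B) \neq \emptyset$, the rest is just coordinate bookkeeping. A point in the intersection has the form $\Sigma_\varphi(x, 0, t_0)$ for some $x \in A$ and $t_0 \in S^1$, and plugging into the explicit formula
$$\Sigma_\varphi(x, 0, t_0) = \bigl(\overline{\phi}_{t_0} x,\ -\Phi_{t_0}(\overline{\phi}_{t_0} x),\ t_0\bigr) \in \Stab(B) = \{(y, 0, t) : y \in B,\ t \in S^1\},$$
the $SV$-component reads $y := \overline{\phi}_{t_0} x \in B$ and the $r$-component reads $\Phi_{t_0}(\overline{\phi}_{t_0} x) = 0$, i.e.\ $\Phi_{t_0}(y) = 0$, which is the desired conclusion.

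The substantive input — the upgrade of $\Sigma_\varphi$ from a plain $\R_+$-equivariant symplectomorphism to a genuinely Hamiltonian one under contractibility of $\varphi$ — is already handed to us in the construction recalled above, so I do not anticipate any real obstacle. The only points demanding a bit of care are the identification of the $S^1$ factor of $\Stab(X) = X \times S^1$ with the zero section $\{r = 0\} \subset T^*S^1$ inside the $(z, r, t)$-coordinates used in the suspension formula, and checking that $\Sigma_{\id} = \id$ so that the Hamiltonian isotopy indeed ends at the identity; both are immediate from the definitions.
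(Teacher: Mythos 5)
Your argument is correct and follows the same route as the paper: invoke the Hamiltonian isotopy $\Sigma_{\varphi^{(s)}}$ guaranteed by contractibility to see that $\Sigma_\varphi$ is an $\R_+$-equivariant Hamiltonian diffeomorphism, apply the stable intersection property to get $\Sigma_\varphi(\Stab(A)) \cap \Stab(B) \neq \emptyset$, and read off $\Phi_{t_0}(y)=0$ from the $r$-coordinate of the suspension formula. The only difference is that you spell out the coordinate bookkeeping which the paper leaves implicit.
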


\begin{proof} The stable intersection property implies that the sets $\Sigma_\varphi(SV \times \z[S^1])$ and $B \times \z[S^1]$ intersect. Thus there exist $z \in SV$ and $t_0 \in S^1$ such that $\bar{\phi}_{t_0}z \in B$ and
$\Phi_{t_0}(\bar{\phi}_{t_0}z)=0$. Setting $y= \bar{\phi}_{t_0}z$, we get the lemma.
\end{proof}

\medskip
\noindent {\bf Proof of Theorem \ref{thm-3}:}
Without loss of generality we may assume $c$ to be an integer.
Suppose it is not true that $\nu_+(f) \geq c$.
Then $f \preceq e^c$ (recall $e^c$ denotes the class of the path $\{e_{ct}\}$).
This means that
\begin{equation}\label{eq-varphi}
H(\bar f_t) \leq H(\bar e_{ct}\bar\phi_t)
\end{equation}
for some contractible loop $\varphi=\{\phi_t\}$ on $\cG_e$.
By the cocycle formula, this yields
$F_t \leq cs + \Phi_t \circ \bar e_{-ct}$. Applying Lemma \ref{lem-loop}, we see that
$\Phi_{t_0}(y)=0$ for some $t_0 \in S^1$ and $y \in B$. Since $x:=\bar e_{ct}y\in B$, we get that
$F_t(x) \leq cs(x)$, contradicting the assumption $F_t|_B > cs$.
\qed

\medskip
\noindent If we consider only compactly supported contactomorphisms, i.e. restrict the above norm to $\tcG$, it descends to $\cG$ as follows. Given $f' \in \cG$, define
\begin{equation}\label{eq-exact-1}
\nu_*(f') := \inf \nu(f)\;,
\end{equation}
where the infimum is taken over all lifts $f$ of $f'$ to $\tcG \subset \tcG_e$. Observe that $\nu_*$ is non-degenerate: indeed, since $\nu(f)$ is integer, the infimum is necessarily attained on some lift $f$, but
$\nu(f)=0$ yields $f=\id$ and hence $f'=\id$.

Denote by $\Pi$ the image of the fundamental group $\pi_1(\cG,\id)$ in $\pi_1(\cG_e,\id)$
under the natural inclusion morphism. Each loop in $\cG_e$ representing an element of $\Pi$ can be written
as a product of a contractible loop in $\cG_e$ and a loop in $\cG$ (note that the order of factors
is not important since $\cG$ is a normal subgroup of $\cG_e$).

Given a compact subset $B \subset SV$, we say that it has {\it strong stable intersection property} if for every loop $\varphi$ representing an element of $\Pi$
its suspension $\Sigma_\varphi$  satisfies

\begin{equation}\label{eq-exact}
\Sigma_\varphi (SV \times \z[S^1]) \cap (B \times \z[S^1]) \neq \emptyset\;.
\end{equation}

\medskip
\noindent
\begin{thm}\label{thm-contract}
Suppose that a compact set $B$ has strong stable intersection property
and in addition $B$ is invariant under the flow  $\bar{e}_t$. Assume
$f \in \tcG$ is generated by a contact Hamiltonian $F_t$ satisfying
$F_t > cs$ on $B$, for some $c >0$ and all $t \in S^1$. Let $f' \in \cG$ be the
time-one endpoint of $f$. Then
$\nu_* (f') \geq c$.
\end{thm}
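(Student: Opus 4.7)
The plan is to adapt the proof of Theorem \ref{thm-3} to the descended norm $\nu_*$, with two new ingredients: a reduction from $\nu_*(f')$ to a concrete lift in $\tcG_c$, and use of the \emph{strong} stable intersection property to handle the loop ambiguity arising from the choice of lift.

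I would argue by contradiction, assuming $\nu_*(f') < c$. Since $\nu$ takes non-negative integer values, the infimum in \eqref{eq-exact-1} is attained by a lift $\hat f \in \tcG_c$ of $f'$ with $\nu(\hat f) \leq m$ for some integer $m < c$, whence $\hat f \preceq e^m$ in $\tcG_e$. Both $\hat f$ and the given $f$ are lifts of $f'$ to $\tcG_c$, so they differ by an element $\gamma \in \pi_1(\cG_c, \id)$, i.e.\ $\hat f = f \cdot \gamma$. Under the inclusion $\tcG_c \hookrightarrow \tcG_e$ this reads $\hat f = f \cdot \iota(\gamma)$, with $\iota(\gamma) \in \Pi$ by definition of $\Pi$. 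Setting $\beta := \iota(\gamma)^{-1} \in \Pi$ and applying bi-invariance of $\preceq$, the relation $\hat f \preceq e^m$ becomes $f \preceq e^m \cdot \beta$. By Lemma \ref{lem-succeq}(1), keeping the given path $\{f_t\}$ (with Hamiltonian $F_t$) fixed, we obtain a path representing $e^m \cdot \beta$ with dominating Hamiltonian; any such path has the form $\{e_{mt} \phi_t\}$ for some loop $\varphi = \{\phi_t\}$ based at $\id$ representing $\beta$, with Hamiltonian $\Phi_t$. The cocycle formula \eqref{eq-cocycle} then yields
\[
F_t(x) \leq ms(x) + \Phi_t(\overline{e}_{-mt} x) \quad \text{for all } t,x.
\]

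Since $\varphi$ represents an element of $\Pi$, the strong stable intersection property \eqref{eq-exact} gives $z \in A$ and $t_0 \in S^1$ with $y := \overline{\phi}_{t_0}(z) \in B$ and $\Phi_{t_0}(y) = 0$, by unpacking $\Sigma_\varphi$ exactly as in the proof of Lemma \ref{lem-loop}. Setting $x := \overline{e}_{mt_0}(y)$, which still lies in $B$ by its $\overline{e}_t$-invariance, and evaluating the inequality at $(t_0, x)$ gives $F_{t_0}(x) \leq ms(x) < cs(x)$ (using $s(x) > 0$ and $m < c$), contradicting the hypothesis $F_t > cs$ on $B$.

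The main obstacle is the bookkeeping in the comparison of the two lifts: one must identify precisely which homotopy class of loops appears after translating $\hat f \preceq e^m$ into a statement about $f$, and confirm that this class lies in $\Pi$. This is exactly why the strong stable intersection property \eqref{eq-exact} --- quantified over all $\Pi$-loops rather than merely contractible ones as in Lemma \ref{lem-loop} --- is the natural hypothesis here.
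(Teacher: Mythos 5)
Your proposal is correct and follows exactly the route the paper intends. The paper's own ``proof'' of Theorem~\ref{thm-contract} is a one-sentence pointer to the proof of Theorem~\ref{thm-3}, stating only that Lemma~\ref{lem-loop} extends to loops representing elements of $\Pi$ and that the loop in the analogue of \eqref{eq-varphi} now lies in $\Pi$. You have faithfully unpacked precisely those modifications: you identify the minimizing lift $\hat f$, pass to the given lift $f$ via $\hat f = f\cdot\iota(\gamma)$ with $\iota(\gamma)\in\Pi$, use bi-invariance of $\preceq$ to obtain $f\preceq e^m\beta$, and then observe that the path comparison forces the loop $\varphi$ to represent $\beta\in\Pi$, so the strong stable intersection property applies exactly where Lemma~\ref{lem-loop} was used before. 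The remaining cocycle-formula computation and the use of $\overline{e}_t$-invariance of $B$ are identical to Theorem~\ref{thm-3}.
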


\medskip
\noindent
The proof repeats verbatim that of Theorem~\ref{thm-3} with the following modifications:
Lemma \ref{lem-loop} extends to
any loop $\varphi=\{\phi_t\}$ representing an element of $\Pi$ under the assumption of
strong stable intersection property, and given $f$ and $f'$ as in our hypotheses, inequality
\eqref{eq-varphi} holds for a loop $\varphi=\{\phi_t\}$ representing an element of $\Pi$.

\subsection{Examples}

In this section we discuss some settings where our norm is well-defined.

\begin{rem}
In Examples~\ref{exam-1}, \ref{exam-AF}, \ref{exam-usher} below
we consider $V$ of the form
$T^*X \times (S^1)^k$, $k \in \N$. In all these settings,
given a set $Y \subset T^*X$ we
write $\widehat Y:= Y \times (S^1)^k$ to denote its lift to $V$.
\end{rem}

\medskip
\begin{exm}\label{exam-1}
Assume that $V= T^*X \times S^1$, where $X$ is a closed manifold, $\lambda = d\tau - pdq$, $e_t(p,q,\tau)=(p,q,\tau+t)$.
Consider the Lagrangian submanifold
$$B := \z[X] \times \{s=1\} \subset SV,$$
where $\{s=1\} \subset S^1 \times \R_+$. $B$
is stably non-displaceable by standard Floer theory. Thus $\nu$ is a metric on
$\tcG_e$ in this case. Furthermore, Theorem \ref{thm-contract} is applicable to this situation. Indeed
we claim (see proof below) for any loop
$\varphi = \{\phi_t\}$ in $\cG_e$ representing an element of $\Pi$, $\Sigma_\varphi(\Stab(B))$
has the same Liouville class as $\Stab(B)$
and hence these Lagrangian submanifolds intersect by a theorem
of Gromov \cite[$2.3.B_4''$]{G85},
implying $B$ has strong stable intersection property.
We conclude that the norm $\nu_*$ defined by \eqref{eq-exact-1}
is unbounded on the subgroup $\cG(\wcU,V)$ consisting of all contactomorphisms
generated by contact isotopies with support in $\wcU$, where $\cU \subset T^*X$
is any tube containing the zero section. In fact, the group $\cG(V)$ is stably unbounded
with respect to $\nu_*$. Let us mention also that the norm $\nu_*$ is greater than or equal to
the norm defined by Zapolsky in \cite{zap} (this readily follows from \cite{zap}).

\begin{proof}
We need to show
$[\Lambda|_{\Stab(B)}] = [\Lambda|_{\Sigma_\varphi(\Stab(B))}]$ where $\Lambda := s\lambda + rdt$ is the Liouville
form on $SV \times T^*S^1$. Without loss of generality assume $\varphi = \{\phi_t\}$
is a loop on $\cG$ (since for contractible $\varphi$ the suspension $\Sigma_\varphi$
is Hamiltonian).
By the K\"unneth formula $H_1(\Stab(B))$ is generated by
loops of the form $\{(\gamma(\rho), 0, 0) \}_{\rho \in S^1}$ and $\{(z_0, 0, t)\}_{t \in S^1}$
where $\{\gamma(\rho)\}_{\rho \in S^1}$ is a loop
in $B$ and $z_0 \in B$. Thus $H_1(\Sigma_\varphi (\Stab(B)))$ is generated by loops
$\{(\gamma(\rho),  - \Phi_0(\gamma(\rho)), 0) \}_{\rho \in S^1}$ and
$\{(\bar\phi_t z_0,  - \Phi_t(\bar \phi_t z_0) , t)\}_{t \in S^1}$.
$\Lambda$ coincides on loops $\{(\gamma(\rho), 0, 0) \}_{\rho \in S^1}$
and $\{(\gamma(\rho),  - \Phi_0(\gamma(\rho)), 0) \}_{\rho \in S^1}$
(since $rdt$ vanishes). Before comparing the other loops
note that we may deform $\z[X] \times S^1$ by
a contact isotopy so that at least one point $w$ of the image lies outside the compact
support of the isotopy $\phi_t$. This deformation lifts and extends trivially to a Hamiltonian isotopy $h_t$ of
$SV \times T^*S^1$ so without loss of generality, appealing also to the Hamiltonian isotopy
$\Sigma_\varphi h_t \Sigma_\varphi^{-1}$ for $\Sigma_\varphi(\Stab(B))$, we may replace
$z_0$ by $(s, w)$ for some $s \in \R_+$ when evaluating $\Lambda$ on loops $\{(z_0, 0, t)\}_{t \in S^1}$ vs.
$\{(\bar\phi_t z_0,  - \Phi_t(\bar \phi_t z_0) , t)\}_{t \in S^1}$.
These loops now lie outside the support of $\Sigma_\varphi$ and so coincide.
\end{proof}
\end{exm}

\medskip
\begin{exm}\label{exam-AF} As above, let  $V= T^*X \times S^1$.
Under extra hypotheses, one can make an even stronger statement than just unboundedness of our norm
on $\tcG(\wcU,V)$ for a tube $\cU \subset T^*X$ about the zero section (cf. Example~\ref{exam-1}). We now
describe a setting in which $\tcG(\wcU,V)$ with norm $\nu$ admits a quasi-isometric monomorphism of $\R^N$  for any $N$.
Let $L \subset T^\ast X$ be a closed Lagrangian submanifold such that
\begin{itemize}
\item [(a)] $HF(L, L) \neq 0$ (Floer homology with coefficients in a field, say $\Z_2$)
\item [(b)] $(a\cdot L) \cap L = \emptyset, \; \forall a > 0, a \neq 1$, where $a \cdot (p, q) = (ap, q)$.
\end{itemize}
We claim that for any bounded domain $\mathcal U \subset T^\ast X$ containing the zero
section and any $N \in \N$,~
$\tcG(\wcU,V)$ admits a quasi-isometric monomorphism of $\R^N$. Some examples of
$X$ and $L$ as above:
\begin{enumerate}
\item $X$ is a closed manifold admitting a closed $1$-form $\alpha$ without zeroes,
and $L$ is the graph of $\alpha$.
\item $X = S^2$ and $L$ is the Lagrangian torus studied by Albers and Frauenfelder in \cite{AF08}
with $HF(L, L; \Z_2) \neq 0.$
\end{enumerate}
\begin{proof}
Fix $N \in \N$ and fix a bounded tube $\mathcal U \subset T^*X$ around the zero section.
Choose distinct real numbers $a_1, \ldots, a_N, a_j \neq 1$
such that $L_j := a_jL \subset \mathcal U$. Thus $L_j, \; j=1, \ldots, N$
are pairwise disjoint.
We now
identify $SV$ with a domain $\mathcal W := \Theta(SV) \subset T^*X \times T^*S^1$ via the
$\R_+$-equivariant symplectic embedding
$$\Theta: SV \to T^*X \times T^*S^1,
(p,q, s, \tau) \mapsto (-s \cdot p,q,s,\tau)$$
where $(p, q) \in T^*X, s \in \R_+, \tau \in S^1$.
In $\mathcal W$, put $\widehat L_j := L_j \times S^1 \subset V$.
Let $W_j \subset T^*X$ be tubular neighborhoods of $L_j$ respectively such that
$\overline{W_j} \cap \overline{W_i} = \emptyset$
when $i \neq j$, where $\overline{W_j}$
denotes the closure of $W_j$. Then the closures of their lifts $\widehat {W}_j$ to $V$
are pairwise disjoint.

Take contact Hamiltonians $H_j$, with $\supp H_j \subset
S(\widehat {{W}_j})$, $H_j = 1$ on $\widehat L_j$ and $0 \leq H_j \leq s$.
Let $h^t_j$ be the corresponding Hamiltonian flow. Consider the map
$\Psi: \R^N \to \tcG(\wcU,V)$
given by $(t_1, \ldots, t_N) \mapsto h^{t_1}_1 \dots h^{t_N}_N$.
This is by construction a homomorphism, which is injective since $W_j$'s are pairwise disjoint.
On the one hand, $h^{t_1}_1 \dots h^{t_N}_N$
is generated by $H = \sum\limits_{j=1}^Nt_jH_j$ so
$$\nu(h^{t_1}_1 \dots h^{t_N}_N) \leq \max\limits_j (|t_j| + 1).$$
On the other hand, $H|_{\widehat L_j} = t_j$ so by Theorem \ref{thm-3}
$$\nu(h^{t_1}_1 \dots h^{t_N}_N) \geq \max\limits_j (|t_j| - 1).$$
Thus
$$ ||t||_\infty - 1 \leq \nu(h^{t_1}_1 \dots h^{t_N}_N) \leq ||t||_\infty + 1\;,$$
where $||t||_\infty:= \max_j|t_j|$. We conclude that
$$\Psi:(\R^ N, ||\cdot||_\infty) \to (\tcG(\cU,V), \nu)\;$$
is a quasi-isometric monomorphism.
\end{proof}
\end{exm}

\medskip
\noindent
\begin{exm}\label{exam-usher}
Let $X$ be a closed manifold equipped with a Riemannian metric $\rho$ {\it without contractible geodesics}.
For $c >0$ put
$$\Xi_c:= \{(p,q) \in T^*X \;:\; |p|_\rho=c\;\}.$$
Let $\T^k = (S^1)^k$ denote the $k$-torus.
We claim that for every $c>0$ and $k \geq 1$, the subset
$$\Xi_c \times \T^k \subset T^*X \times T^*\T^k$$
(identifying $\T^k$ with the zero section $\z[\T^k]$) is non-displaceable.
As an immediate consequence, arguing as in Example \ref{exam-AF},
we get that for any bounded domain $\mathcal U \subset T^\ast X$ containing the zero
section and any $N \in \N$,~
$\tcG(\wcU,V)$ admits a quasi-isometric monomorphism of $\R^N$. We thank Michael Usher \cite{usher} for
his suggestion to consider hypersurfaces $\Xi_c$ in a similar Hofer-geometric context.

\begin{proof} Observe that $\Xi'= \Xi_c \times \T^k$ is a coistropic submanifold of $T^*X \times T^*\T^k$.
Moreover, it is stable in the sense of Theorem 1.5 of \cite{Ginzburg}. Assume on the contrary that
$\Xi'$ is displaceable. By Ginzburg's Theorem 1.5, there exists a disc of positive symplectic area
with boundary lying on one of the fibers of $\Xi'$. Every fiber of $\Xi'$ is of the form $L:=\gamma \times \T^k$, where $\gamma$ is a trajectory of the geodesic flow on $\Xi$. Since all closed geodesics of $\rho$ are non-contractible,
the inclusion $L \to T^*X \times T^*\T^k$ induces a monomorphism
of fundamental groups. Thus every disc with boundary on $L$ has vanishing symplectic area, a contradiction.
\end{proof}
\end{exm}

\medskip
\noindent
\begin{exm}\label{exm-prequantization} {\rm Let $(M,\omega)$ be a closed symplectic manifold
with $[\omega] \in H^2(M,\Z)$. Let $\pi: V \to M$ be a prequantization of $M$. This means
that $\pi$ is a principal $S^1$-bundle equipped with an $S^1$-invariant contact form $\lambda$
such that $d\lambda =\pi^*\omega$. The Reeb flow $e_t$ of $\lambda$ is just the natural $S^1$-action
on $(M,\omega)$. We shall focus on the group $\tcG(V)=\tcG_e(V)$ (these groups coincide since
$M$ is closed).

Assume that $M$ contains a closed Lagrangian submanifold $L$ with the following properties:
\begin{itemize}
\item[{(i)}] The connection on $V$
defined by $\lambda$ has trivial holonomy when restricted to $L$ (the Bohr-Sommerfeld condition);
\item[{(ii)}] The relative homotopy group $\pi_2(M,L)$ vanishes.
\end{itemize}
It is proven in \cite[Theorem 1.3.D]{EP} that under these assumptions $\succeq$ is a partial order on $\tcG(V) = \tcG_e(V)$ (these groups coincide, as $V$ is closed) and we deduce that the norm $\nu$ is well-defined on $\tcG(V)$. Moreover, since $V$ is compact, by Remark \ref{rem-stb-unbd-tGe}, $\nu$ is stably unbounded.}
\end{exm}

\begin{exm}\label{exm-projective} Let $V = \R P^{2n+1}$ with standard contact structure. It is proven in \cite[Theorem 1.3.E]{EP} that $\succeq$ is a partial order on $\tcG(\R P^{2n+1}) = \tcG_e(\R P^{2n+1})$. Therefore, the norm $\nu$ is well defined on $\tcG(\R P^{2n+1})$. Moreover, since $\R P^{2n+1}$ is compact, $\nu$ is unbounded, by Remark \ref{rem-stb-unbd-tGe}. The situation changes drastically when we pass to the double cover
	$S^{2n+1}$ of $\R P^{2n+1}$: as we shall see in the next section, any conjugation-invariant norm on $\tcG(S^{2n+1})$
	is bounded, provided $n \geq 1$.

While $\nu$ descends to a conjugation-invariant norm $\nu_*$ on $\cG(\R P^{2n+1})$,
it is not clear if $\nu_*$ is unbounded.
Could it be that every conjugation-invariant norm on
$\cG(M^3)$, $M^3$ a contact three-manifold, is bounded? The analog of this statement holds for diffeomorphisms by a result of Burago-Ivanov-Polterovich (Theorem \cite[1.11(iii)]{BIP}) and work in progress by Patrick Massot seeks to use open book decompositions to develop a contact version of
that argument.
\end{exm}

\subsection{Norm $\nu$ and $k$-translated fixed points}

In this section we remark on the relationship of our norm $\nu$ -- on $\tcG$ -- with the notion of translated point.
We say that $f \in \tcG$ has a {\it $k$-translated fixed point}
$x \in V$, $k \in \N$, if there exists a contact isotopy $\{f_t\}$, $t \in [0,1]$ in the class $f$ such that $\bar f_tx =\bar e_{kt}x$. In particular, $f_1 x = x$, since the Reeb flow is 1-periodic.

This is a definition for
$f$ in the universal cover $\tcG$. In terms of Sandon and Colin's terminology
\cite{sandon-thesis, colin-sandon} in $\cG$ it amounts to saying
that there is a path $\{f_t\}$ in the class $f$
such that for all $t \in [0, 1]$, $x$ is a {\it translated point} of $f_t$
with $e_{kt}x$ being the point at which the $f_t$ orbit
of $x$ re-joins the Reeb chord through $x$.
Since $k \in \N$, $x$ is actually a {\it discriminant point} of $f_1$
(i.e. a translated point which is fixed).
We remark that translated points in Sandon's terminology also correspond to {\it leafwise intersection points}
which have been studied recently by Albers-Merrry \cite{albers-merry} as well as Sandon \cite{sandon-leaf}.

We claim the notion of $k$-translated fixed points
for $f \in \tcG$ is
related to the norm $\nu$ in the following sense: contactomorphisms in the ball of $\nu$-radius
$k$ which do not remain in that ball when perturbed necessarily have $k$-translated fixed points.
We now make the notion of perturbation precise.

Define, for $\eps > 0$, $\mathring{\cB}(\eps)$ (resp. $\cB(\eps)$) as the set of $f\in \tcG$ which can be generated by Hamiltonians $F_\pm$ such that $F_+(x,s,t)< \eps s$ and $F_-(x,s,t)>-\eps s$
(resp. $F_+(x,s,t)\leq \eps s$ and $F_-(x,s,t)\geq-\eps s$).
As in Remark~\ref{rem:periodic} we may assume such Hamiltonians
are 1-periodic.
Recall that for integer $\eps = k$, $\cB(k)$ is the ball of radius $k$ in the norm $\nu$, i.e.
$$
\cB(k)=\{f\in \tcG \colon \nu(f) \leq k\}.
$$
We say that $f\in \tcG$ is \emph{$k$-robust}, for $k \in \N$, if
$f \mathring{\cB}(\eps) \subset \cB(k)$ for some $\eps >0$.
Note that $f$ is $k$-robust if and only if $gfg^{-1}$ is $k$-robust
(though the value of $\epsilon$, in general, will change).

One readily checks that $f$ is $k$-robust if and only if $f\in \mathring{\cB}(k)$. Indeed, for the `only if' direction, let $\epsilon > 0$ such that $f\mathring{\cB}(\epsilon) \subset \cB(k)$ and take
positive $c < \epsilon$ and elements $g_\pm \in \mathring{\cB}(\epsilon)$ such that $g_\pm$ coincides with $e^{\pm c}$ on the support of $f$. Then $fe^{\pm c} \in \cB(k)$ yields
the existence of Hamiltonians $F_\pm$ as needed to conclude $f \in \mathring{\cB}(k)$.

\medskip
\noindent
\begin{thm}\label{pro-tpoints} Let $k\in \N$. Every $f \in \cB(k)$ without $k$-translated points is $k$-robust.
\end{thm}

\begin{proof} Assume $f \in \cB(k) \subset \tcG$ has no $k$-translated fixed points.

\medskip
\noindent {\sc Step 1.} Let $W := SV \times T^*S^1$, with coordinates $(s, x,r,t)$. Recall that $W$ is equipped with the symplectic form $\Omega:= d(s\lambda) + dr \wedge dt$ and
with the $\R_+$-action $c \cdot (s,x,r,t) = (cs, x,cr,t)$, $c \in \R_+$.

Since $f \preceq e^k$, $f$ is generated by a 1-periodic
Hamiltonian $F(s,x, t)$ with $F(s,x, t) \leq ks$. Moreover $F$ vanishes when $x$ lies outside some compact subset of $V$.

Put $H(s,x, r,t) = r + F(s,x,t)$ and $K(s,x,r,t) = r+ks$.
Since $H \leq K$ the hypersurface $\Xi = \{H =0\} \subset W\;$ lies in the closed domain $U = \{K \geq 0\} \subset W\;$. Moreover, $dH=dK$ at each point of the set $Y= \Xi \cap \partial U$, since on $Y$ the function $H-K$ attains its maximal value. Thus the Hamiltonian vector fields $\sgrad H$ and $\sgrad K$ coincide on $Y$.

Observe that all orbits of the Hamiltonian flow of $K$ on $\partial U$ are (up to time shifts $\tau \mapsto \tau + \tau_0$, where $\tau$ stands for the time variable of the flow) circles of the form $\gamma_{x,s}(\tau)=  (e_{k\tau} x, s, -ks, \tau)$.

\medskip
\noindent {\sc Step 2.} We claim that {\it the set $Y \subset \Xi$
does not contain a compact invariant set of the Hamiltonian flow $h_\tau$ of $H$ on $\Xi$.} Indeed, otherwise this invariant set necessarily contains some $h_\tau$-orbit, which, since $\sgrad H=\sgrad K$ on $Y$, must be a circle of the form $\gamma_{x,s}(\tau)$.
Denote by $p: W \to SV$ the natural projection, and note that
$$ \bar{f}_\tau (x,s) = p\bigl(h_\tau(x,s,-ks,0)\bigr) =  p(\gamma_{x,s}(\tau)) = \bar{e}_{k\tau}(x,s)\;.$$ Therefore
$(x,s)$ is a $k$-translated fixed point of $f$, a contradiction with the assumption of the theorem. The claim follows.

\medskip
\noindent {\sc Step 3.} Observe that the Hamiltonians $H,K$ are equivariant with respect to the $\R_+$-action on $W$. Since $H(x,s,r,t)=r$ for $x$ outside a compact subset of $V$ and
$K(x,s,r,t) = r+ks$ with $k >0$, the set $Y/\R_+$ is compact.
By an $\R_+$-equivariant application of a theorem of Sullivan \cite{Sul,LS2} there exists an $\R_+$-equivariant function $\Phi(x,s,r,t)$ on $W$
with $d\Phi (\sgrad H) <0$ at every point of $Y$ (namely, apply Sullivan's theorem to the induced flow on $\{s = 1\}$ identified with $W / \R_+$ and extend the resulting function equivariantly). Here we use the fact that $Y$ does not contain a compact invariant set of the Hamiltonian flow $h_\tau$ on $\Xi$, see Step 2.
Since on $Y$
$$d\Phi (\sgrad H) =d\Phi(\sgrad K)= \Omega(\sgrad K, \sgrad \Phi) =-dK(\sgrad \Phi ),$$
it follows that $\sgrad \Phi$ is transversal to $\partial U$ at the points of $Y$ and moreover $\sgrad \Phi$ looks inside $U$ at points of $Y$ (cf. the proof of \cite[Theorem 1.5]{PPS-MMJ}).
Denoting by $\phi_t$ the Hamiltonian flow of $\Phi$, we get that for a sufficiently small $\epsilon >0$
\begin{equation}
\label{eq-strict-ineq}
\phi_\epsilon(\Xi) \subset \text{Interior} (U)\;.
\end{equation}

\medskip
\noindent {\sc Step 4.} The hypersurface $\Xi':= \phi_\epsilon(\Xi)$ is transversal to the lines parallel  to the $r$-axis and hence has the form
$\Xi'=\{r+F'(x,s,t)=0\}$ for some $\R_+$ equivariant Hamiltonian $F'$ on $SV$. Put $H'(x,s,r,t)=r+F'(x,s,t)$. We claim that the time one map $f'$ of $F'$ is conjugate to $f$ in $\tcG(V)$. Indeed, $S = \Xi \cap \{t=0\}$ is a Poincar\'e section of
the Hamiltonian flow $h_\tau$ on $\Xi$. Similarly, $S' = \Xi' \cap \{t=0\}$ is a Poincare section of the Hamiltonian flow $h'_\tau$ of $H'$ on $\Xi'$. Denote by $\psi$ and $\psi'$ the corresponding return maps.

Further, $\phi_\epsilon(S)$ is a Poincar\'e section of $f'_t$ with return map
\begin{equation}\label{eq-conj-1}
\psi''=\phi_\epsilon \psi \phi_\epsilon^{-1}\;.
\end{equation}

The orbits of $\bar{f}'_t$ establish an $\R_+$-equivariant symplectomorphism, say $\eta$, between $S'$ and $\phi_\epsilon(S)$.
Thus
\begin{equation}\label{eq-conj-2}
\psi' = \eta^{-1}\psi''\eta\;.
\end{equation}

Finally, let $\pi: S \to SV$ and $\pi': S' \to SV$ be the restrictions of the natural projection. Then
$\bar{f}= \pi\psi\pi^{-1}$ and $\bar{f}' = \pi'\psi'(\pi')^{-1}$. Combining this with \eqref{eq-conj-1}
and \eqref{eq-conj-2} we get that $\bar{f}$ and $\bar{f}'$ are conjugate by $\R_+$-equivariant symplectomorphisms, and hence $f$ and $f'$ are conjugate as well.

\medskip
\noindent {\sc Step 5.} By \eqref{eq-strict-ineq} we have the strict inequality $F' < ks$,
and hence $f'\mathring{\cB}(\delta) \subset \cB(k)$
for some $\delta >0$. Since $f'$ is conjugate to $f$,
the same holds for $f$ (with, perhaps, smaller $\delta$).

Repeating the arguments of Steps 1-5, and decreasing if necessary $\delta>0$ we get that
$f\mathring{\cB}(\delta) \subset \cB(k)$. This yields robustness of $f$.
\end{proof}


\section{Obstructions}\label{sec:obstr}

\subsection{Overview}
In the next sections we discuss some restrictions on conjugation-invariant norms on $\cG(V)$ or
$\tcG(V)$ for certain contact manifolds $V$. Our first result concerns discreteness, our second result boundedness. Recall that a conjugation invariant norm $\mu$ is called discrete if $\mu(g) \geq c$ for some $c>0$ and all $g \neq \id$.

\begin{thm}\label{thm-discrete}
	Let $V$ be any contact manifold.
	\begin{enumerate}
		\item Any conjugation-invariant norm on $\cG(V)$ is discrete.
		\item Any conjugation-invariant norm on $\tcG(V)$ is discrete on $\tcG(V) \setminus \pi_1 (\cG(V))$.
	\end{enumerate}
\end{thm}

\begin{exm}
This example shows the second part of Theorem \ref{thm-discrete} cannot be improved.
It follows from \cite{E92} that $\pi_1(\cG(S^3)) = \Z$. Let $\phi \in \pi_1(\cG(S^3))$ be a generator, and let $r \in (0,1)$ be an irrational number. Define a norm $\mu$ on $\tcG(S^3)$ by setting
$$\mu(\phi^n) = |e^{2\pi i n r}- 1|,$$ and $\mu(g) = 1$ for $g \notin \pi_1(\cG(S^3))$.
One readily checks that $\mu$ defines a norm on $\tcG(S^3)$, which is conjugation-invariant since
$\pi_1(\cG(S^3))$ is a normal subgroup. Moreover, $\mu$ is clearly not discrete.
\end{exm}

Next we address boundedness. To this end, we consider the fragmentation norm. Recall that any compactly supported contact isotopy can be represented as a finite product of contact isotopies each supported in a Darboux ball (see \cite{banyaga}). Here by Darboux ball we mean a contact embedded
image of an open ball centred at the origin in the standard Euclidean space. The contact fragmentation norm $\nu_F(f)$ of $f \in \tcG(V)$ is the minimal number of factors in such a representation of $f$. One can analogously define the contact fragmentation norm on $\cG(V)$. These norms are useful for us as they are maximal in the following sense:

\begin{thm}\label{thm-frag-universal}
Let $V$ be a contact manifold and let $\mu$ be a conjugation-invariant norm on $\cG(V)$ or $\tcG(V)$ which is bounded on a $C^1$-neighborhood of the identity. Then there is a constant $C = C(V,\mu)$ such that $\mu \leq C \cdot \nu_F$.
\end{thm}

This automatically implies:

\begin{cor}\label{cor-frag-bdd}
Let $V$ be a contact manifold and suppose the fragmentation norm on $\cG(V)$ (resp. $\tcG(V)$) is bounded. Then any conjugation- invariant norm on $\cG(V)$ (resp. $\tcG(V)$) which is bounded on a $C^1$-neighborhood of the identity is bounded.
\end{cor}

As an example, consider the sphere $S^{2n+1}$ with its standard contact structure, for $n\geq 1$.

\begin{prop}\label{prop-frag-sphere-bdd}
The fragmentation norm on $\tcG(S^{2n+1})$ is bounded by $2$ when $n\geq 1$.
\end{prop}

\begin{proof}
For $z \in S^{2n+1}$ put $V_z = S^{2n+1} \setminus \{z\}$. Observe that $V_z \subset S^{2n+1}$ is a Darboux ball. Now, let $\{f_t\}$ be a contact isotopy representing $f \in \tcG(S^{2n+1})$. Take a sufficiently small ball  $B \subset V$ such that $X := \cup_t f_t(B) \neq S^{2n+1}$. Fix any point $z \notin X$.  Let $\{g_t\}$ be a contact isotopy supported in $V_z$ with $\left.g_t \right|_{B} = \left.f_t \right|_{B}$ for all $t \in [0,1]$. Set $h_t = g_t^{-1}f_t$. Observe that $h_t \in \cG(V_w)$ for any point $w \in B$. Then $f = gh$ and hence $\nu_F(f) \leq 2$.
\end{proof}

As an immediate consequence of Proposition \ref{prop-frag-sphere-bdd} and Corollary \ref{cor-frag-bdd} we get:
\begin{cor}
Let $n \geq 1$. Any conjugation-invariant norm on $\cG(S^{2n+1})$ or $\tcG(S^{2n+1})$ which is bounded on a $C^1$-neighborhood of the identity is bounded.
\end{cor}

\begin{rem}
All boundedness results in this paper - both Theorem \ref{thm-frag-universal} and Corollary \ref{cor-frag-bdd} above and the analogous resuts in
Section~\ref{subsection-sandtubes} -
involve a $C^1$-boundedness hypothesis and their proofs
use perfectness of groups of contactomorphisms of finite
smoothness \cite{Tsuboi}. If one instead appeals to perfectness of the
group of smooth contactomorphisms \cite{Ryb} then this additional hypothesis
is not needed and one obtains analogous boundedness statements which
hold for any conjugation-invariant norm.
We note, nevertheless,
that all the norms mentioned in this paper do
satisfy the $C^1$-boundedness assumption appearing above:
\begin{itemize}
\item The norm $\nu$ (and consequently also $\nu_*$) defined in Section \ref{sec:constr} satisfies this assumption, since any element of $\tcG(V)$ sufficiently $C^1$-close to $\id$ can be represented by a flow generated by a Hamiltonian satisfying $|H| \leq \epsilon s$.
	\item Zapolsky's norms $\rho_{\mathrm{osc}}$ and $\rho_{\mathrm{sup}}$ \cite{zap} satisfy this assumption since, as mentioned in Example \ref{exam-1}, they are bounded above by the norm $\nu_*$ (as follows from \cite[Proposition 2.9(iii)]{zap}).
	\item The discriminant, zig-zag, and oscillation norms \cite{colin-sandon} satisfy this assumption. Indeed, both the oscillation and discriminant norms are bounded above by the zig-zag norm, so it suffices to consider the latter. An easy modification of the proof of \cite[Lemma 2.1]{colin-sandon} shows that any element $\phi \in \tcG$ sufficiently $C^1$-close to the identity can be represented as a product $\phi = f g$, where $f$ is positive, $g$ is negative, and both are $C^1$-close to the identity. By the proof of \cite[Lemma 2.1]{colin-sandon} again, both $f$ and $g$ are embedded, and so $\phi$ has zig-zag norm $\leq 2$.
\item The norms of Borman and Zapolsky coming from homogeneous quasi-morphisms
(see Section~\ref{sec:intro})
satisfy this assumption. Indeed, the quasi-morphisms $\phi$
they construct are \emph{monotone}, meaning that $\phi(g) \leq \phi(h)$ if $g \preceq h$. This implies that the corresponding conjugation-invariant norms $\mu$  are dominated by our norm $\nu$ in the sense that for each such $\mu$
there exists $K > 0$ such that $\mu(g) \leq K \cdot \nu(g)$ for all $g \in \tcG(V)$ (see \cite[Lemma 1.33]{BZ}).
\end{itemize}
\end{rem}

\begin{rem} \label{rem:compare} In the previous Remark
we mentioned comparison of the norms of
Colin-Sandon \cite{colin-sandon} with each other and the norm of
Zapolsky \cite{zap}
with ours. Let us now discuss comparison
of our norm $\nu$ with the zigzag norm, $\nu_{zig}$,
and oscillation norm, $\nu_{osc}$, of
Colin-Sandon. In fact the compatibility of $\nu_{osc}$ with the partial
order $\preceq$ \cite[Proposition 3.4]{colin-sandon} immediately implies
that
$\nu_{osc} \leq 2 \nu(f) + 1$. Comparison of our norm
with the zigzag norm is more subtle.
It is useful to consider the word norm $\mu_H$ defined
by a generating set $S$ consisting of isotopies with
Hamiltonian $H$ such that $-s < H < s$. It will not be conjugation-invariant
in general since $S$ is not, but we do have $\nu \leq \mu_H$ by Remark~\ref{rem-amplitude}.
The converse relationship between $\nu$ and $\mu_H$ on the other hand
is not yet clear; it
depends on how hard it is to simultaneously fulfill both conditions in
Remark~\ref{rem-amplitude}, that on $F_+$ and that on $F_-$, with a single Hamiltonian
$F = F_+ = F_-$. The norm $\mu_H$ can in
some cases be shown to dominate $\nu_{zig}$, i.e., $\nu_{zig} \leq K \mu_H$.
It can also be shown that the norm $\mu_H$ is {\it equivalent} to $\lceil \nu_{S} \rceil$,
i.e., each norm dominates the other,
where $\nu_{S}$ is the (pseudo-) norm of Shelukhin
\cite{shelukhin}, and $\lceil \cdot \rceil$ denotes the ceiling function.
Indeed, the inequality $\nu_{S} \leq \mu_H$ is immediate, hence $\lceil \nu_{S} \rceil \leq \mu_H$,
while an inequality in the converse direction follows by a re-parametrization trick (see proof of \cite[Lemma 5.1.C]{leonid-book}).
\end{rem}

The rest of Section~\ref{sec:obstr} is organized as follows. Theorem \ref{thm-discrete} is proved in Section \ref{subsect-lower}, and Theorem \ref{thm-frag-universal} in Section \ref{subsect-upper}. Beforehand, in Section \ref{subsect-alg} we recall some algebraic results used in those proofs. Finally, in Section \ref{subsection-sandtubes} we discuss a class of sub-domains of contact manifolds for which
analogous boundedness results can be obtained.

\subsection{Algebraic results}\label{subsect-alg}

The following definition is taken from \cite{BIP}.

\begin{defin}
Let $G$ be a group, and let $H \subset G$ be a subgroup. We say that an element $g \in G$ \emph{$m$-displaces} $H$ if the subgroups $$H, gHg^{-1}, g^2Hg^{-2}, \ldots, g^m H g^{-m} $$
pairwise commute.
\end{defin}

The geometric meaning of $m$-displacement in our context is as follows. Let $U \sub V$  be an open subset.
We say that a contactomorphism $\phi\in \cG(V)$ $m$-displaces $U$ if the subsets
\begin{equation*}
U, \phi(U), \ldots, \phi^m(U)
\end{equation*}
are pairwise disjoint. If this holds then $\phi$ $m$-displaces the subgroup $\cG(U)$ of $\cG(V)$. Similarly, if $\wtil{\phi} = \{\phi_t\} \in \tcG$ is a path such that $\phi_1$ $m$-displaces $U$ then $\wtil{\phi}$ $m$-displaces the subgroup $\tcG(U,V)$ of $\tcG(V)$.

Returning to the general algebraic setting, given a subgroup $H \subset G$ and an element $h$ in the commutator subgroup $[H, H]$, we denote by $cl_H(h)$ the commutator length of $h$, which is the minimal number of commutators needed to represent $h$ as a product of commutators. We will need the following result (see \cite{BIP}, Theorem 2.2). Suppose that $\mu$ is a conjugation-invariant norm on a group $G$. Let $H \subset G$ be a subgroup such that there exists $g \in G$ which $m$-displaces $H$. Then for any $h \in [H, H]$ with $cl_H(h) = m$ one has
\begin{equation}\label{eq-14}
\mu(h) \leq 14 \mu(g).
\end{equation}

Finally, we will use the following result of Tsuboi \cite{Tsuboi} dealing with contactomorphisms of finite smoothness.
Let $W$ be a connected contact manifold of dimension $2n+1$. For $1\leq r < \infty$, denote by $\cG^r(W)$ the identity component of the group of compactly supported  $C^r$-contactomorphisms of $W$, and by $\tcG^r(W)$ its universal cover. Moreover, for an open subset $X\subset W$, denote by $\tcG^r(X,W)$ the subgroup of $\tcG^r(W)$ containing those contact isotopies which are supported in $X$.

Tsuboi's theorem states that for $r\leq n+3/2$, the groups $\cG^r(W)$ and $\tcG^r(W)$ are perfect, i.e. equal to their commutator subgroups.  In particular if $X \subset W$ is a connected open subset, the groups $\cG^r(X)$ and $\tcG^r(X,W)$ are perfect. The latter group is perfect since it is an epimorphic image of the perfect group $\tcG^r(X)$ (similarly to \eqref{eq-chi} above).

\subsection{Discreteness}\label{subsect-lower}

In this section we prove Theorem \ref{thm-discrete} on discreteness. In what follows by an embedded open ball we mean the interior of an embedded closed ball. We use the following fact which we prove in Section \ref{subsect-upper} (see Example \ref{exam-space-port}).
\begin{lemma}\label{lem-cG-transitive}
Let $D_1, D_2 \subset V$ be Darboux balls and let $U$ be an open subset of $D_1$ such
that $\text{Closure}(U) \subset D_1$. Then there exists $\phi \in \cG(V)$ such that $\phi(U) \subset D_2$.
\end{lemma}

\begin{proof}[Proof of Theorem \ref{thm-discrete}]
First we note that (2) follows from (1). Indeed, denote by $\pi \colon \tcG \to \cG$ the natural projection. Assume that (1) holds, and let $\mu$ be a conjugation-invariant norm on $\tcG$. Define a conjugation-invariant norm $\mu_*$ on $\cG$ by $\mu_*(f) = \inf \{\mu(f') \,:\,{\pi(f')=f}\}$. Then by assumption $\mu_*$ is discrete. Observe that $\mu(f) \geq \mu_*(\pi f)$ for all $f\in \tcG$. Since for all $f \notin \pi_1(\cG)$, $\pi(f) \neq \id$, we get (2).

Next we prove (1). Assume the result does not hold. Fix an open ball $U$ with closure contained in a Darboux ball $D$, and a pair of elements
$\phi, \psi \in \cG(U)$ with $[\phi, \psi] \neq \id$. We claim
$\mu([\phi, \psi]) = 0$, a contradiction.

By assumption, for any $\eps > 0$ we can find $\theta \in \cG$ with $\mu(\theta) < \eps$. Since $\theta \neq \id$, $\theta$ moves some point, so there must exist an open ball $B \sub V$ such that $\theta(B) \cap B = \eset$.
Let $\eta \in \cG$ such that $\eta (U) \sub B$ (which exists by Lemma \ref{lem-cG-transitive}).
Then $\eta^{-1} \theta \eta$ displaces $\cG(U)$ and hence by \eqref{eq-14}
$$
\mu([\phi, \psi]) \leq 14 \mu(\eta^{-1} \theta \eta) = 14 \mu(\theta) < 14 \eps.
$$
Our claim follows.
\end{proof}

\subsection{Boundedness}\label{subsect-upper}

In this section we prove Theorem \ref{thm-frag-universal}.

\begin{defin}\label{def-portable}
	An {\it open} connected contact manifold $(V, \xi)$ is called \emph{contact portable} if there exists a
	connected compact set $V_0 \sub V$  and a contact isotopy $\{P_t\}$ of $V$, $t \geq 0$, $P_0=\id$ such that the following hold:
	\begin{itemize}
		\item The set $V_0$ is an \emph{attractor} of $\{P_t\}$, i.e. for every compact set $K \sub V$
		and every neighborhood $U_0 \supset V_0$ there exists some $t > 0$
		such that $P_t(K) \sub U_0$.
		\item There exists a contactomorphism $\theta $ of $V$ displacing $V_0$.
	\end{itemize}
\end{defin}

\medskip
\noindent
Note that $\theta$ is not assumed to be compactly supported.
Definition \ref{def-portable} is a contact version of the notion of portable manifold defined in \cite{BIP}.

\medskip
\noindent
\begin{exm}\label{exam-space-port}{\rm An example of a contact portable manifold is $\R^{2n+1}$, equipped with the standard contact structure given by the
		kernel of the $1$-form $\alpha = dz - y dx$. Here we use the coordinates $(x, y, z) \in \R^n \times \R^n \times \R$.
		The contact isotopy is given by
		$$
		P_t \colon (x, y, z) \mapsto (e^{-t} x, e^{-t}y, e^{-2t} z).
		$$
		The attractor $V_0$ can be taken to be the closed ball $\{ |x|^2 + |y|^2 + z^2 \leq 1\}$ and the contactomorphism $\theta$ can be given, for example, by
		$$
		\theta(x, y, z) = (x, y , z + 3).
		$$
		Similarly, any Euclidean ball $\{|x|^2+|y|^2+z^2 <R\}$ is contact portable.
		
		In particular, this example shows that every compact subset of $\R^{2n+1}$ can be contact
		isotoped into an arbitrary small neighborhood of the origin. Applying an appropriate cut-off function to the generating contact Hamiltonian, this can be done inside a Darboux chart in an arbitrary contact manifold.
		This, together with the transitivity of $\cG$,
		proves Lemma \ref{lem-cG-transitive}.}
\end{exm}

The proof of the following proposition is analogous to that of Theorem 1.17 in \cite{BIP}.

\begin{prop}\label{prop-port-bd}
	Let $(V, \xi)$ be a contact portable manifold. Then any conjugation-invariant norm on $\cG(V)$ or $\tcG(V)$,
	which is bounded on a $C^1$-neighborhood of the identity, is bounded.
\end{prop}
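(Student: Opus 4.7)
The proof follows BIP Theorem 1.17 essentially verbatim, with the symplectic ingredients replaced by their contact analogs. I describe the plan for $\tcG_c(V)$; the case of $\cG_c(V)$ is parallel.

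The overall strategy is to first reduce to controlling $\nu$ on elements supported in a neighborhood of the attractor $V_0$, and then combine perfectness with a displacement bound. For the reduction, given $f \in \tcG_c(V)$ with $\supp f \sub K$, the attractor property yields $T>0$ with $P_T(K) \sub V_0$. Although $\{P_t\}$ need not be compactly supported, the contact isotopy extension theorem allows me to replace it by an isotopy $\{\tilde P_t\} \in \tcG_c(V)$ agreeing with $\{P_t\}$ on a neighborhood of $K$. Then $\tilde P_T f \tilde P_T^{-1}$ has support inside $V_0$, and by conjugation invariance $\nu(f) = \nu(\tilde P_T f \tilde P_T^{-1})$. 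An analogous extension produces $\tilde\theta \in \cG_c(V)$ with $\tilde\theta(V_0) \cap V_0 = \eset$, together with an open neighborhood $U$ of $V_0$ such that $\tilde\theta(U) \cap U = \eset$.

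It suffices, then, to bound $\nu$ on $H := \tcG_c(U,V)$. By Rybicki's theorem recalled in Section \ref{subsect-alg}, $H$ is perfect, so every $h \in H$ is a product of $m$ commutators of elements of $H$ for some $m=m(h)$. The heart of the argument is to exhibit, for every $m$, an element $\theta_m \in \tcG_c(V)$ which $m$-displaces $H$ and whose $\nu$-norm is bounded by a uniform multiple of $\nu(\tilde\theta)$, independent of $m$. Such a $\theta_m$ is built exactly as in BIP by interleaving applications of $\tilde\theta$ with iterates of the contracting isotopy $\tilde P_t$: after each application of $\tilde\theta$ the image of $U$ is pushed by $\tilde P_t$ into a still-smaller neighborhood of $V_0$, so that the iterates $\theta_m^k U$ for $0 \leq k \leq m$ remain pairwise disjoint, while $\theta_m$ itself is assembled from a bounded number of conjugates of $\tilde\theta$ by elements of $\tcG_c(V)$. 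The inequality \eqref{eq-14} applied at commutator length $m$ then yields $\nu(h) \leq 14\, \nu(\theta_m)$, a uniform bound in $h$.

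The main obstacle, just as in the symplectic case of BIP, is the construction of the $m$-displacer $\theta_m$ with $\nu$-norm controlled independently of $m$. This construction is delicate but uses nothing beyond the data of Definition \ref{def-portable}; the transition from the symplectic to the contact category requires only contact isotopy extension and fragmentation, both available in dimension $\geq 3$ and implicit in Rybicki's theorem.
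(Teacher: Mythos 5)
Your high-level plan matches the paper's (and BIP's Theorem 1.17): reduce via conjugation to elements supported near $V_0$, invoke Rybicki perfectness to write such elements as products of commutators, and apply the displacement inequality \eqref{eq-14}. However, your description of the key construction -- the $m$-displacer -- is both unnecessarily complicated and, as stated, wrong. You propose a \emph{family} $\theta_m$ built by ``interleaving applications of $\tilde\theta$ with iterates of the contracting isotopy $\tilde P_t$,'' with each image of $U$ ``pushed by $\tilde P_t$ into a still-smaller neighborhood of $V_0$.'' But pushing toward $V_0$ is the wrong direction: $V_0$ lies inside $U$, so contracting toward $V_0$ carries sets back \emph{into} $U$, destroying the disjointness you are trying to establish. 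The mechanism you describe would not produce pairwise disjoint iterates.

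The paper's actual construction produces a \emph{single} element $\psi \in \tcG_c(V)$ which $m$-displaces $U$ for every $m$ simultaneously, and the crucial point is to contract not toward $V_0$ but toward its displaced image. Concretely: choose a neighborhood $U$ of $V_0$ displaced by $\theta$, set $W = \theta(U)$, and observe that $W$ is an attractor for the \emph{conjugated} isotopy $g_t = \theta \circ P_t \circ \theta^{-1}$. Hence $g_T(U \cup W) \subset W$ for large $T$; truncating and re-parametrizing the generating Hamiltonian yields $\psi \in \tcG_c(V)$ with $\psi(U \cup W) \subset W$. Then $\psi(U) \subset W \setminus \psi(W)$, so $\psi^k(U) \subset \psi^{k-1}(W) \setminus \psi^k(W)$, and these ``annuli'' are pairwise disjoint for all $k$. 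Thus every $h$ with $cl_H(h) = m$ satisfies $\nu(h) \leq 14\nu(\psi)$, a bound independent of $m$ because $\psi$ is a single fixed element; there is no need to relate $\nu(\psi)$ to $\nu(\tilde\theta)$ or to control the ``number of conjugates'' in its construction. You should replace your $\theta_m$ construction with this single-displacer argument, and in particular correct the direction of contraction: it is the conjugated attractor $\theta(V_0)$, disjoint from $U$, that plays the organizing role, not $V_0$ itself.
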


We prove the case of $\tcG(V)$; the proof for $\cG(V)$ is similar.

\begin{proof}
	Let $\mu$ be such a norm, and fix a constant $\delta >0 $ and a
	$C^1$-neighborhood $\mathcal{V}\subset \tcG$ of the identity such that $\mu(g)<\delta$ for $g\in \mathcal{V}$.
	Let $V_0$, $P_t$ and $\theta$ be as in Definition \ref{def-portable}.
	We can find a small connected neighborhood $U$ of $V_0$ with compact
	closure such that $\theta(U) \cap U = \emptyset$.
	First we show that $\mu$ is bounded on the subgroup $H := \tcG(U,V)$ of $\tcG$.
	
Indeed, let $U' := \theta(U)$. Then  $U'$ is a neighborhood of $\theta(V_0)$, which
	is an attractor for the isotopy $\{ g_t = \theta \circ P_t \circ \theta^{-1}\}$. Therefore,
	for some $T > 0$, $g_T (U \cup U') \sub U'$.
	Truncating the contact Hamiltonian generating $\{g_t\}$
	and re-parametrizing gives a contact isotopy $\psi = \{\psi_t\} \in \tcG(V)$ such
	that $\psi_1 (U \cup U') \sub U'$. We claim $\psi_1$ $m$-displaces $U$
	for all $m \geq 0$. Indeed $U$ lies in the complement of $U'$, so
	$\psi_1 (U) \subset U' \setminus \psi_1 (U')$
	and $\forall k \in \N$,
	$\psi_1^k (U) \subset \psi_1^{k-1}(U') \setminus \psi_1^k(U')$, which implies the $\psi_1^k (U)$
	are pairwise disjoint.
	
	Now, by Tsuboi's theorem any $h \in H$ can be written as a product
	$h=h_1 \cdot \ldots \cdot  h_m$, $h_i=[\sigma_i,\tau_i]$ where $\sigma_i, \tau_i \in \tcG^1(U,V)$.
	Since the $2m$-fold product of $\tcG(U,V)$ is dense in the $2m$-fold product of $\tcG^1(U,V)$
	and the product of $m$ commutators defines a continuous map to $\tcG^1(U,V)$,
	there is some $g \in \tcG(U,V)$ satisfying $g^{-1}h \in \mathcal{V}$ such that
	$g=g_1 \cdot \ldots \cdot g_m$ where each $g_i$ is a product of commutators of
	elements of $\tcG(U,V)$.
	In particular, $cl_H(g)\leq m$, and hence
	$$
	\mu(g) \leq 14 \mu(\psi).
	$$
	But then
	$$
	\mu(h) \leq \mu(g) + \mu(g^{-1}h) \leq 14 \mu(\psi) + \delta=:C.
	$$
	This proves that $\mu$ is bounded on $H$.
	
	Now, given $f = \{f_t\} \in \tcG(V)$, let $K$ be a compact set such
	that $\cup_t \supp f_t \sub K$. There exists $T$ such that $P_T (K) \sub U$.
	As before,
	truncating and re-parametrizing the contact
	Hamiltonian which generates $\{P_t\}_{t \in [0, T]}$, we can produce
	$\eta \in \tcG(V)$ with $\eta (K) \sub U$. Then $\eta \circ f \circ  \eta^{-1} \in H =\tcG(U,V)$ and
	so,
	$$
	\mu(f) = \mu (\eta \circ f \circ  \eta^{-1}) \leq C.
	$$
\end{proof}

\begin{rem}\label{rem-bd-pseudo}
Observe that the proof of Proposition \ref{prop-port-bd} works equally well for a conjugation-invariant pseudo-norm; the non-degeneracy of $\mu$ was never used.
\end{rem}

We can now prove the maximality of the fragmentation norm. As above, we prove it for $\tcG(V)$; the proof for $\cG(V)$ is similar.
\begin{proof}[Proof of Theorem \ref{thm-frag-universal}]
Let $\mu$ be a conjugation-invariant norm on $\tcG(V)$ which is bounded on a $C^1$-neighborhood of the identity. Fix a Darboux ball $B \subset V$. By Proposition \ref{prop-port-bd}, $\mu$ is bounded on $\tcG(B, V)$ - indeed, pulling back $\mu$ by the epimorphism $\tcG(B) \to \tcG(B,V)$ (recall \eqref{eq-chi}) yields a conjugation-invariant pseudo-norm on $\tcG(B)$, which is bounded by Remark \ref{rem-bd-pseudo} and Example \ref{exam-space-port}, and so $\mu$ is bounded on $\tcG(B,V)$, say by $C>0$. Now, let $f \in \tcG(V)$. Write $f=h_1 \cdots h_N$, where $N = \mu_F(f)$, and each $h_i$ is represented by an isotopy supported in a Darboux ball $B_i \subset V$. By Lemma \ref{lem-cG-transitive}, one can find contact isotopies mapping each $\supp h_i$ into $B$, and so each $h_i$ is conjugate to a contact isotopy supported in $B$. Then for any $1\leq i \leq N$, $\mu(h_i) \leq C$. We get $\mu(f) \leq CN = C \nu_F(f)$.
\end{proof}

\subsection{Boundedness on sub-domains}\label{subsection-sandtubes}

\begin{defin}
Suppose $(V, \xi)$ is a contact manifold and $V' \subset V$ an open subset.
Then we say that $V'$ is a {\it portable sub-domain} of $V$ if
there exists a compact set $V_0 \sub V'$,
and a contact isotopy $\{P_t\}_{t \in \R}$ of $V$ such that the following hold:
\begin{itemize}
\item For every compact set $K \sub V'$ and every neighborhood $U_0 \supset V_0$
there exists some $t > 0$
such that $P_t(K) \sub U_0$.
\item There exists a contactomorphism $\theta $ supported in $V'$ displacing $V_0$.
\end{itemize}
\end{defin}
Observe that a portable sub-domain $V' \subset V$ need not be a
contact portable
manifold, since we allow more ``squeezing room'' (the isotopy $P_t$ may have support outside $V'$).
By the same argument as in the proof of  Proposition~\ref{prop-port-bd}, we have the following result:

\begin{prop}\label{prop-portdomain-bd}
Let $(V, \xi)$ be a contact manifold and $V' \subset V$ be a portable sub-domain of $V$.
Then any conjugation-invariant norm on $\cG(V)$ (resp. $\tcG(V)$) which is bounded
on a $C^1$-neighbourhood of the identity is necessarily
bounded on $\cG(V')$ (resp. on $\tcG(V',V))$.
\end{prop}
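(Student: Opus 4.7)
The plan is to imitate the proof of Proposition~\ref{prop-port-bd} almost word-for-word, accommodating two differences: the displacing contactomorphism $\theta$ is now only supported in $V'$ while the attracting isotopy $\{P_t\}$ may pass through $V \setminus V'$, and we need to bound $\nu$ only on the subgroup $\tcG_c(V',V)$ (respectively $\cG_c(V')$). I will describe the universal-cover case; the other is identical.

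First, I would choose a relatively compact open neighborhood $U$ of $V_0$ with $\overline{U} \sub V'$ so small that $\theta(\overline{U}) \cap \overline{U} = \eset$, and set $W := \theta(U) \sub V'$ together with $g_t := \theta \circ P_t \circ \theta^{-1}$. Because $\theta$ is supported in $V'$, the compact set $\theta^{-1}(\overline{U \cup W})$ still lies in $V'$, so the sub-domain attractor property yields some $T > 0$ with $g_T(\overline{U \cup W}) \sub \theta(V_0) \sub W$. Truncating and reparametrizing the contact Hamiltonian generating $\{g_t\}_{t \in [0,T]}$ produces $\psi = \{\psi_t\} \in \tcG_c(V)$ with $\psi_1(\overline{U \cup W}) \sub W$. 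Exactly as in Proposition~\ref{prop-port-bd}, the iterates $\psi_1^k(U)$, $k \ge 0$, are then pairwise disjoint, so $\psi$ $m$-displaces the subgroup $\tcG_c(U,V)$ for every $m$. Combining this with Rybicki's theorem (so that $\tcG_c(U,V)$ is perfect) and inequality~\eqref{eq-14} gives $\nu(h) \le 14\,\nu(\psi)$ for every $h \in \tcG_c(U,V)$.

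To finish, let $\wtil h = \{h_t\} \in \tcG_c(V',V)$ with $\bigcup_t \supp h_t \sub K$ for some compact $K \sub V'$. By the sub-domain attractor property $P_{T'}(K) \sub V_0 \sub U$ for some $T' > 0$. The set $\bigcup_{t \in [0,T']} P_t(K)$ is a compact subset of $V$, so cutting off the contact Hamiltonian of $\{P_t\}$ outside a compact neighborhood of this set and reparametrizing yields $\eta \in \tcG_c(V)$ with $\eta_1(K) \sub U$. The conjugate $\eta \wtil h \eta^{-1}$ then has support in $U$ and hence lies in $\tcG_c(U,V)$, so conjugation-invariance of $\nu$ gives $\nu(\wtil h) = \nu(\eta \wtil h \eta^{-1}) \le 14\,\nu(\psi)$.

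The main obstacle lies in this last conjugation step: the attracting isotopy $\{P_t\}$ is not assumed compactly supported and may leave $V'$, so one must verify carefully that the truncation produces $\eta \in \tcG_c(V)$ whose time-one map still maps $K$ into $U$. This is a routine cut-off for contact Hamiltonians performed on a compact neighborhood of the trajectory $\bigcup_{t \in [0,T']} P_t(K)$, but it is the one place where the hypothesis that $V'$ is only a sub-domain (rather than a contact portable manifold in its own right) requires attention.
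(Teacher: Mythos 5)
Your proof is correct and follows essentially the same route as the paper, which itself only says the argument is the same as that of Proposition~\ref{prop-port-bd}. The two modifications you identify — pulling $\overline{U\cup W}$ back by the $V'$-supported $\theta$ before invoking the attractor property, and compactly truncating $\{P_t\}$ along the (compact) trajectory of $K$ even though that trajectory may leave $V'$ — are exactly the points that need attention, and you handle them correctly.
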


\medskip
\noindent
\begin{exm}\label{exam-portdomain}
{\rm
Consider the contact manifold $V= \R^{2n} \times S^1$ equipped with the contact structure
$\xi=\text{Ker}(dt-\alpha)$, where $\alpha=\frac{1}{2}(pdq-qdp)$. In what follows we assume that
$n \geq 2$. Put $\cU(r) := B^{2n}(r) \times S^1$,
where $B^{2n}(r)$ stands for the ball $\{\pi(|p|^2 +|q|^2) < r\}$. In \cite{sandon-bi-invt} Sandon defined a conjugation
invariant norm on $\cG(V)$  which is bounded on all subgroups $\cG(\cU(r))$. We claim that every conjugation-invariant norm on $\cG(V)$ is necessarily
bounded on $\cG(\cU(r))$ if $r < 1$.

To prove the claim, take any Hamiltonian symplectomorphism $\theta$ supported
in $B^{2n}(r) \subset \R^{2n}$ which displaces the origin. Let $r' < r$ be sufficiently small
so that $B^{2n}(r')$ is also displaced and let
$\widehat\theta \in \cG(\cU(r))$ be the lift of $\theta$ to a contactomorphism of $V$
supported in $\cU(r)$.
We have $\widehat\theta(\cU(r')) \cap \cU(r') =\emptyset$. Further,
by the Squeezing Theorem \cite[Theorem 1.3]{EKP06}
there exists $P \in \cG(V)$ such that $P(\cU(r)) \subset \cU(r')$. Therefore
$\cU(r)$ is a portable sub-domain of $V$, and the claim follows from
Proposition~\ref{prop-portdomain-bd}.}\end{exm}

\medskip

In fact, the above argument can be applied more generally.
Recall that a symplectic manifold $(M^{2n},\omega=d\alpha)$ is called {\it Liouville}
if it admits a vector field $v$ and a compact $2n$-dimensional submanifold $\overline{\cU}$ with
connected boundary $Q=\partial \overline{\cU}$ with the following properties:
\begin{itemize}
\item $i_\eta \omega = \alpha$. This yields that the flow $\eta_t$ of $v$ is conformally symplectic;
\item $v$ is transversal to $Q$.
\end{itemize}
One can show that $(Q,\text{Ker}(\alpha))$ is a contact manifold and for all specific choices
of $\overline {\mathcal U}$ these are naturally contactomorphic.
We refer to $(Q, \text{Ker}(\alpha))$ as {\it the ideal contact boundary} of $M$. The set
$C:= \bigcap_{t>0} \eta_{-t}(\overline{\cU})$ is called {\it the core} of $M$.

Consider now the contact manifold $V=M \times S^1$ equipped with the contact form $\lambda =dt-\alpha$. 
Put $\cU(r) := \eta_{\,\log r}(\cU) \times S^1$, where $\cU$ is the interior of $\overline{\cU}$.

\begin{prop}\label{prop-nonord}
Suppose that the ideal contact boundary of $(M,d\alpha)$ is non-orderable and
the core $C$ is displaceable by a Hamiltonian diffeomorphism in its arbitrary small neighborhood. Then
there exists $r_0 >0$ so that any conjugation-invariant norm on $\cG(V)$ (resp. $\tcG(V)$) which is bounded
on a $C^1$-neighbourhood of the identity is necessarily
bounded on $\cG(\cU(r))$ (resp. $\tcG(\cU(r),V))$ for all positive $r < r_0$.
\end{prop}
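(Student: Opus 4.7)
The plan is to verify that $\cU(r)$ is a portable sub-domain of $V$ for all sufficiently small $r>0$, and then to invoke Proposition~\ref{prop-portdomain-bd}. This directly generalizes the preceding example, in which $M=\R^{2n}$ plays the role of the symplectization of $S^{2n-1}$: the core $\{0\}$ is Hamiltonianly displaceable, and the ideal contact boundary $S^{2n-1}$ is non-orderable for $n\geq 2$, which together yield portability of $\cU(r)$ for $r<1$.

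To handle the displacement requirement, I will first choose $r_0>0$ small enough that the displaceability hypothesis produces a compactly supported Hamiltonian diffeomorphism $\varphi$ of $M$ with support inside $\eta^{-r_0}(\cU)$ and $\varphi(C)\cap C=\emptyset$. By compactness of $C$, $\varphi$ will in fact displace a closed neighborhood $K_0$ of $C$. For any $0<r<r_0$, the nesting $\eta^{-r_0}(\cU)\subset\eta^{-r}(\cU)$ places the support of $\varphi$ inside $\eta^{-r}(\cU)$. Since $(M,d\alpha)$ is exact and $\varphi$ is compactly supported, $\varphi$ lifts to a strict contactomorphism
\[
\theta(x,t)=(\varphi(x),\,t+F(x)),\qquad dF=\varphi^{\ast}\alpha-\alpha,
\]
of $V=(M\times S^1, dt-\alpha)$, where $F$ is the standard compactly supported primitive constructed by integrating $\alpha(X_{h_s})-h_s$ along the flow. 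This $\theta$ is supported in $\cU(r)$ and displaces $V_0:=K_0\times S^1\subset\cU(r)$, as required.

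The bulk of the argument is the attractor property: I need to construct a contact isotopy $\{P_s\}_{s\geq 0}$ of $V$ such that every compact $K\subset\cU(r)$ is eventually mapped into $V_0$. The natural candidate is a lift of the backward Liouville flow $\eta^{-s}$, which contracts $\cU$ onto $C$; however, since $(\eta^{-s})^{\ast}\alpha=e^{-s}\alpha$, this flow lifts naturally only to $M\times\R$ (rescaling the $\R$-coordinate by $e^{-s}$) and not to $M\times S^1$. This is exactly the contact non-squeezing obstruction addressed in \cite{EKP06}, and the role of non-orderability of $(Q,\ker\alpha)$ is to remove it: non-orderability of $Q$ is equivalent to the existence of a positive contractible loop of contactomorphisms of $Q$, whose suspension in $SQ\times T^{\ast}S^1$ is Hamiltonianly isotopic to the identity and can therefore be used to absorb the winding defect coming from the $\R$-versus-$S^1$ discrepancy. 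The hard part will be carrying out this construction explicitly in the Liouville setting, adapting the contact squeezing of \cite{EKP06} from the linear model $\R^{2n}\times S^1$ to a general Liouville manifold $M$ with non-orderable ideal boundary, in such a way that the resulting isotopy is globally defined on $V$ and maps $\cU(r)$ into any prescribed neighborhood of $C\times S^1$ in finite time on compact subsets. Once this squeezing isotopy is in hand, iterating it drives every compact subset of $\cU(r)$ into $V_0$, establishing portability; Proposition~\ref{prop-portdomain-bd} then yields the conclusion.
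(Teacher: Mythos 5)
Your strategy is exactly the paper's: show that $\cU(r)$ is a portable sub-domain of $V$ and invoke Proposition~\ref{prop-portdomain-bd}. The displacement half of your construction — lift a compactly supported Hamiltonian diffeomorphism $\varphi$ of $M$, supported close enough to $C$ and displacing a compact neighborhood $K_0$ of $C$, to a contactomorphism $\theta(x,t)=(\varphi(x),\,t+F(x))$ of $V$ supported in $\cU(r)$ and displacing $V_0:=K_0\times S^1$ — is exactly what the paper does with the hypothesis on the core. For the attractor half you correctly identify the obstruction (the Liouville flow rescales $\alpha$, so it lifts to $M\times\R$ but not to $M\times S^1$) and the mechanism by which non-orderability of the ideal contact boundary $(Q,\ker\alpha)$ removes it, namely the suspension of a positive contractible loop of contactomorphisms of $Q$. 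Where you stop short is in treating the construction of the resulting squeezing isotopy of $V$ as an open-ended technical project, to be ``adapted'' from the linear model $\R^{2n}\times S^1$ to a general Liouville manifold. In fact no adaptation is needed: this is precisely the content of Theorem~1.19 of \cite{EKP06}, which is already stated for an arbitrary Liouville manifold with non-orderable ideal contact boundary, and the paper simply cites it (together with the argument of Remark~1.23 and the proof of Theorem~1.3 of that paper) to produce, for a suitable $r_0>0$, a compactly supported contact isotopy of $V$ squeezing $\cU(r_0)$ into any prescribed neighborhood of $C\times S^1$; iterating that isotopy gives the attractor property. Once you replace the ``hard part'' of your sketch with that citation, your argument coincides with the paper's proof.
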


\begin{proof} By  \cite[Theorem 1.19]{EKP06} there is some $r_0 > 0$ such that (by iterating the Theorem enough times),
one can obtain an isotopy which squeezes
$\cU(r_0)$ arbitrarily close to $C \times S^1$, in fact within\footnote{See the argument in Remark 1.23 of
that paper and the proof of Theorem 1.3 on the same page.} some larger $\cU(r') \subset V$.
Since the core $C$ is Hamiltonian displaceable in its arbitrarily small neighborhood,
the set $C \times S^1$ is displaceable in its arbitrarily small neighborhood by a contact isotopy of $V$.
It follows that for $0< r < r_0$ the set  $\cU(r)$ is a portable sub-domain of $V$ and
hence  by Proposition~\ref{prop-portdomain-bd} any conjugation-invariant norm on $\cG(V)$ (resp. $\tcG(V)$) is bounded on  $\cG(\cU(r))$ (resp. on $\tcG(\cU(r),V))$.
\end{proof}

\medskip
\noindent
An important class of Liouville manifolds is formed by complete Stein manifolds, that
is by K\"{a}hler manifolds $(M,J,d\alpha)$ admitting a proper bounded from below Morse function $F$ with
$\alpha = JdF$ and $\eta=-\text{grad} F$, where the gradient is taken with respect to the metric
$d\alpha(\cdot,J\cdot)$. (By a result of Eliashberg \cite{E90} these manifolds admit an alternative description
as Weinstein manifolds provided $\dim M \geq 3$). For a generic $F$, the core of $M$ is
an isotropic $CW$-complex of dimension $n-k$ with $k \in [0, n]$ (see \cite{EG, BiCi}). We say that $M$ is $k$-{\it subcritical}
if $k \geq 1$ and {\it critical} if $k=0$.

\medskip

\noindent
The assumptions of Proposition \ref{prop-nonord} implying boundedness of
suitable conjugation-invariant norms on some $\cG(\cU(r))$ hold true, for instance, when the Liouville manifold $M$ is $k$-critical with $k \geq 2$ : indeed, the ideal contact boundary is non-orderable by Theorem 1.16 of \cite{EKP06}, while the core $C$ in this case is Hamiltonian displaceable in its arbitrary small neighborhood (cf. \cite[Section 3]{BiCi}).

\medskip

\noindent
On the other hand, for some critical Liouville manifolds $M$ it is known that the core is stably non-displaceable. For instance, this is true for cotangent bundles of closed manifolds, where the core
can be taken as the zero section. By Theorems \ref{thm-2}, \ref{thm-3} and Corollary \ref{cor-stb-unbd-tG} this implies the conjugation invariant norm $\mu$ on $\tcG(V)$ is well defined and stably unbounded when restricted to $\tcG(\cU(r))$ for all $r>0$. Here the 1-periodic Reeb flow on $V$ is associated to the form
$\lambda$ and is given by rotation along the $S^1$-factor so leaves $\cU(r)$ invariant.

\medskip

\noindent
We thus see a dichotomy between boundedness of
conjugation-invariant norms in small $\cU(r) \subset M \times S^1$ in the case of sub-critical $M$, and stable unboundedness of the norm $\nu$ on any $\cU(r)$
in the case of certain critical $M,$ those with stably non-displaceable core. It is natural to ask:

\begin{question} {\rm Is the norm $\nu$ well-defined
and stably unbounded on any $\cU(r) \subset M \times S^1$
for all critical $M$?}
\end{question}

\medskip
\noindent
 It could
be that $V = M \times S^1$ is orderable for every Liouville manifold
$(M, d\alpha)$ which would confirm at least well-definedness of the norm $\nu$
for all $V$ of this kind.
This is known, for instance, for some critical $M$ such as cotangent bundles, as well as for some subcritical $M$ such as linear spaces due to the thesis of Sandon \cite{sandon-thesis}. P. Albers
pointed out that the methods of \cite{albers-merry} should prove the result for general $M$.

\medskip
\noindent
Regarding boundedness vs. stable unboundedness, however,
little is
known so far. On the one hand, in case one
wished to prove boundedness by using squeezing as in Proposition \ref{prop-nonord} above, it is unknown  whether the ideal contact boundary of critical $M$ is orderable or not. On the other hand,
it is unlikely that the technique of Section \ref{sec:constr} based on stable intersection property might be applicable to proving stable unboundedness for subcritical manifolds with $\dim M \geq 2$.
Indeed, by  \cite[Theorem 6.1.1]{BiCi1}, there are no ``hard" symplectic obstructions to Hamiltonian displacement of a compact subset of $SV=M \times \R_+ \times S^1$ from a given closed
subset, so existence of sets with stable intersection property is quite problematic.
It would be interesting to explore this point further.

\section{Acknowledgements} The results of the present paper have been presented at the
AIM workshop `Contact topology in higher dimensions' in May, 2012. We thank the organizers,
J.~Etnyre, E.~Giroux and K.~Niederkrueger, for the invitation and AIM for the excellent research
atmosphere. We are grateful to  V.~Colin, Y.~Karshon, and S.~Sandon for very helpful
discussions during the workshop. We also thank V.~Colin, S.~Sandon, F.~Zapolsky, P.~Albers and W.~Merry for communicating early versions of \cite{colin-sandon},\cite{zap} and \cite{albers-merry} respectively with us. We thank M.~Usher for useful discussions. L.P. and D.R. were partially supported by the Israel Science Foundation grants 509/07 and 178/13,
by the National Science Foundation grant DMS-1006610
and by the European Research Council advanced grant 338809.

\end{document}